\DeclareSymbolFont{cyrletters}{OT2}{wncyr}{m}{n}
\DeclareMathSymbol{\Sha}{\mathalpha}{cyrletters}{"58}
\newcommand{\Adual}{\widehat{A}}
\DeclareMathOperator{\NS}{NS}
\newcommand{\sheafU}{\mathscr{U}}
\newcommand{\AbC}{\mathfrak{Ab}_{\calC}}
\newcommand{\GpC}{\mathfrak{Gp}_{\calC}}
\bmdefine\boldmu{\mu}
\newcommand{\Bigwedge}[1]{\sideset{}{^{#1}}\bigwedge}
\newcommand{\defi}[1]{\textsf{#1}} % for defined terms
\newcommand{\C}{{\mathbb C}}
\newcommand{\F}{{\mathbb F}}
\newcommand{\G}{{\mathbb G}}
\newcommand{\Q}{{\mathbb Q}}
\newcommand{\R}{{\mathbb R}}
\newcommand{\Z}{{\mathbb Z}}
\newcommand{\kbar}{{\overline{k}}}
\newcommand{\qq}{{\mathfrak q}}
\newcommand{\calC}{{\mathcal C}}
\newcommand{\calT}{{\mathcal T}}
\newcommand{\calW}{{\mathcal W}}
\newcommand{\FF}{{\mathscr F}}
\newcommand{\LL}{{\mathscr L}}
\newcommand{\OO}{{\mathscr O}}
\DeclareMathOperator{\Char}{char}
\DeclareMathOperator{\Hom}{Hom}
\DeclareMathOperator{\HOM}{\bf Hom}
\DeclareMathOperator{\Ext}{Ext}
\DeclareMathOperator{\EXT}{\bf Ext}
\DeclareMathOperator{\Aut}{Aut}
\DeclareMathOperator{\Gal}{Gal}
\DeclareMathOperator{\Sym}{Sym}
\DeclareMathOperator{\Pic}{Pic}
\DeclareMathOperator{\Jac}{Jac}
\DeclareMathOperator{\PIC}{\bf Pic}
\DeclareMathOperator{\Spec}{Spec}
\newcommand{\ab}{{\operatorname{ab}}}
\newcommand{\tH}{{\operatorname{th}}}
\newcommand{\Cech}{\v{C}ech}
\newcommand{\HH}{{\operatorname{H}}}
\newcommand{\HHcech}{{\check{\HH}}}
\newcommand{\surjects}{\twoheadrightarrow}
\newcommand{\isom}{\simeq}
\newcommand{\tensor}{\otimes} % binary tensor product
\newcommand{\directsum}{\oplus} % binary direct sum
\newcommand{\Directsum}{\bigoplus} % direct sum of a collection
\newtheorem{theorem}{Theorem}[section]
\newtheorem{lemma}[theorem]{Lemma}
\newtheorem{corollary}[theorem]{Corollary}
\newtheorem{proposition}[theorem]{Proposition}
\theoremstyle{definition}
\newtheorem{definition}[theorem]{Definition}
\newtheorem{example}[theorem]{Example}
\theoremstyle{remark}
\newtheorem{remark}[theorem]{Remark}
\begin{document}

\title{Self cup products and the theta characteristic torsor}
\subjclass[2010]{Primary 18G50; Secondary 11G10, 11G30, 14G25, 14K15}
\keywords{Weil pairing, theta characteristic, self cup product}
\author{Bjorn Poonen}
\thanks{B.~P.\ was partially supported by NSF grant DMS-0841321.}
\address{Department of Mathematics, Massachusetts Institute of Technology, Cambridge, MA 02139-4307, USA}
\email{poonen@math.mit.edu}
\urladdr{http://math.mit.edu/~poonen/}
\author{Eric Rains}
\address{Department of Mathematics, California Institute of Technology, Pasadena, CA 91125}
\email{rains@caltech.edu}
\date{April 10, 2011}

\begin{abstract}
We give a general formula relating self cup products
in cohomology to connecting maps in nonabelian cohomology, 
and apply it to obtain a formula for the self cup
product associated to the Weil pairing.
\end{abstract}

\maketitle

%****************************************************************************
\section{Introduction}
\label{S:introduction}

We prove a general statement about cohomology, 
Theorem~\ref{T:cohomology pairing},
that reinterprets the self cup product map 
\begin{align*}
	H^1(M) &\to H^2(M \tensor M) \\
	x &\mapsto x \cup x 
\end{align*}
as the connecting map of cohomology for a certain sequence
\[
	1 \to M \tensor M \to \sheafU M \to M \to 1
\]
involving a canonical nonabelian central extension $\sheafU M$.
The proof of Theorem~\ref{T:cohomology pairing} 
can be read with group cohomology in mind,
but we prove it for an arbitrary site
since later we need it for fppf cohomology.
The sheaf $\sheafU M$ has other properties as well:
for example, there is a map $M \to (\sheafU M)^{\ab}$
that is universal for quadratic maps from $M$ to a (variable) abelian sheaf.

As an application of Theorem~\ref{T:cohomology pairing},
we answer a question of B.~Gross
about the self cup product associated to the Weil pairing 
on the $2$-torsion of the Jacobian $A:=\Jac X$ of a curve $X$.
The Weil pairing
\[
	e_2 \colon A[2] \times A[2] \to \G_m
\]
induces a symmetric bilinear pairing
\[
	\langle\;,\;\rangle \colon 
	\HH^1(A[2]) \times \HH^1(A[2]) \to \HH^2(\G_m).
\]
We prove the identity 
\begin{equation}
\label{E:x,x=x,c}  
	\langle x, x \rangle = \langle x,c_\calT \rangle,
\end{equation}
where $c_\calT$ is a particular canonical element of $\HH^1(A[2])$.
Namely, $c_\calT$ is the class of the torsor under $A[2]$ parametrizing the
theta characteristics on $X$.

We have been vague about the field of definition of our curve;
in fact, it is not too much harder to work over an arbitrary base scheme.
(See Theorem~\ref{T:cohomology pairing on Jacobian}.)
Moreover, we prove a version with Jacobians replaced by arbitrary
abelian schemes $A$,
in which $c_\calT$ is replaced by an element $c_\lambda \in \HH^1(\Adual[2])$.
(See Theorem~\ref{T:cohomology pairing on abelian scheme}.)

With an eye towards applications of these theorems,
we give many criteria for the vanishing of $c_\lambda$ and $c_\calT$,
some of which generalize earlier results of M.~Atiyah and D.~Mumford:
see Proposition~\ref{P:c_lambda=0} and Remark~\ref{R:Atiyah and Mumford}.
We also give an example over $\Q_3$ for which $c_\calT \ne 0$,
and an example over $\Q$ for which $c_\calT$ is nonzero but locally trivial.

As further motivation, some of our results,
namely Proposition~\ref{P:commutator pairing} 
and Theorem~\ref{T:cohomology pairing on Jacobian},
are used in~\cite{Poonen-Rains-selmer-preprint} 
to study the distribution of Selmer groups.

%****************************************************************************
\section{Some homological algebra}
\label{S:quadratic maps}

\subsection{A tensor algebra construction}
\label{S:construction}

We will define a functor $U$ from the category of $\Z$-modules
to the category of groups,
the goal being Theorem~\ref{T:cohomology pairing}.
Let $M$ be a $\Z$-module.
Let $TM = \Directsum_{i \ge 0} T^iM$ be the tensor algebra.
Then $T^{\ge n}M = \Directsum_{i \ge n} T^iM$ is a 2-sided ideal of $TM$.
Let $T^{<n}M$ be the quotient ring $TM/T^{\ge n}M$.
Let $UM$ be the kernel of 
$(T^{<3}M)^\times \to (T^{<1}M)^{\times} = \Z^\times = \{\pm 1\}$.
The grading on $TM$ gives rise to a filtration of $UM$,
which yields the following central extension of groups
\begin{equation}
\label{E:Q group}
	1 \to M \tensor M \to UM \stackrel{\pi}\to M \to 1.
\end{equation}
Elements of $UM$ may be written as $1+m+t$ 
where $m \in M$ and $t \in M \tensor M$,
and should be multiplied as follows:
\[
	(1+m+t)(1+m'+t') = 1 + (m+m') + ((m \tensor m')+t+t').
\]
The surjection $UM \to M$ admits 
a set-theoretic section $s\colon M \to UM$ sending $m$ to $1+m$.
If $m,m' \in M$, then 
\begin{equation}
\label{E:cocycle of section}
	s(m) \; s(m') \; s(m+m')^{-1} = m \tensor m' 
\end{equation}
in $M \tensor M \subseteq UM$.

A simple computation verifies the following universal property of $UM$:
\begin{proposition}
\label{P:universal property}
The map $s \colon M \to UM$ is universal for 
set maps $\sigma \colon M \to G$ to a group $G$
such that $(m,m')\mapsto \sigma(m) \sigma(m')\sigma(m+m')^{-1}$
is a bilinear function from $M \times M$ to an abelian subgroup of $G$.
\end{proposition}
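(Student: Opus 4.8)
The plan is to prove the two halves of the universal property separately: first that any group homomorphism $\phi\colon UM \to G$ with $\phi \circ s = \sigma$ is unique, and then that one exists. For uniqueness I would observe that $UM$ is generated as a group by the image $s(M)$. Indeed, every element can be written $1+m+t = s(m)\,(1+t)$ with $1+t \in M\tensor M$, and the central subgroup $M\tensor M$ is generated by the simple tensors $m\tensor m'$, which by~\eqref{E:cocycle of section} equal $s(m)\,s(m')\,s(m+m')^{-1}$. Hence any homomorphism out of $UM$ is determined by its values on $s(M)$, and these are forced to be $\sigma(m)$.

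For existence, let $\sigma\colon M \to G$ be a set map whose defect $b(m,m') := \sigma(m)\,\sigma(m')\,\sigma(m+m')^{-1}$ is bilinear with values in an abelian subgroup $H \subseteq G$. Since $H$ is an abelian group and $b$ is $\Z$-bilinear, it factors through a homomorphism $B\colon M\tensor M \to H$ with $B(m\tensor m') = b(m,m')$. I would then define $\phi(1+m+t) := \sigma(m)\,B(t)$; since $B(0)=e$ this restricts to $\sigma$ on $s(M)$ and to $B$ on $M\tensor M$, so only the multiplicativity of $\phi$ remains.

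That multiplicativity is the step I expect to require the one genuinely nonobvious observation. The obstruction is that $H$ is assumed only abelian, not central, so a priori $\sigma(m)$ need not commute with the values of $b$. However, this commutativity is forced: expanding the triple product $\sigma(m)\,\sigma(m')\,\sigma(m'')$ in two ways using associativity in $G$, and then applying the bilinearity of $b$, yields after cancelling in the abelian group $H$ the identity $\sigma(m)\,b(m',m'')\,\sigma(m)^{-1} = b(m',m'')$ for all $m,m',m''$. Thus every value of $b$, and hence every element of $B(M\tensor M)$, commutes with every $\sigma(m)$. Granting this, the homomorphism check is direct: using the multiplication law in~\eqref{E:Q group}, the relation $\sigma(m)\,\sigma(m') = b(m,m')\,\sigma(m+m')$, and the commutation just established, both $\phi\bigl((1+m+t)(1+m'+t')\bigr)$ and $\phi(1+m+t)\,\phi(1+m'+t')$ are seen to equal $\sigma(m+m')\,b(m,m')\,B(t)\,B(t')$. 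This produces the required homomorphism and, with the uniqueness above, establishes the universal property.
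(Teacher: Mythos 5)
Your proof is correct and is exactly the ``simple computation'' that the paper leaves to the reader: existence via the formula $\phi(1+m+t)=\sigma(m)B(t)$ and uniqueness from the fact that $s(M)$ generates $UM$. In particular, you correctly isolated and resolved the one genuinely nontrivial point --- that since the target subgroup $H$ is assumed only abelian rather than central in $G$, one must deduce from associativity and bilinearity that every value of $b$ commutes with every $\sigma(m)$ before the multiplicativity check goes through.
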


A \defi{quadratic map} $q \colon M \to G$ 
is a set map between abelian groups
such that $(m,m') \mapsto q(m+m')-q(m)-q(m')$ is bilinear.
(Perhaps ``{\em pointed} quadratic map'' would be
better terminology; 
for instance, the quadratic maps $q \colon \Q \to \Q$ 
are the polynomial functions of degree at most $2$ sending $0$ to $0$.)
Proposition~\ref{P:universal property} implies:

\begin{corollary}
\label{C:universal property for quadratic maps}
The map $M \to (UM)^\ab$ is universal for quadratic maps
from $M$ to an abelian group.
The map $M \to (UM)^\ab \tensor \F_2$ is universal for quadratic maps
from $M$ to an abelian group such that the image is killed by $2$.
\end{corollary}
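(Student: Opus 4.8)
The plan is to deduce both assertions formally from Proposition~\ref{P:universal property} together with the universal property of the abelianization, so that no genuinely new computation is needed. Throughout, write $\bar s \colon M \to (UM)^\ab$ for the composite of $s \colon M \to UM$ with the projection $p \colon UM \surjects (UM)^\ab$.

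First I would record that $\bar s$ is itself quadratic. Set $B(m,m') := \bar s(m+m') - \bar s(m) - \bar s(m')$, computed additively in $(UM)^\ab$. By~\eqref{E:cocycle of section}, $-B(m,m')$ is the image of $m \tensor m'$ under the homomorphism $M \tensor M \injects UM \xrightarrow{p} (UM)^\ab$; since $(m,m') \mapsto m \tensor m'$ is bilinear, so is $B$. Next, given any quadratic map $q \colon M \to A$ with $A$ abelian, I would apply Proposition~\ref{P:universal property} to the set map $\sigma := q \colon M \to A$: because $A$ is abelian, $\sigma(m)\sigma(m')\sigma(m+m')^{-1} = -(q(m+m')-q(m)-q(m'))$ is bilinear with image in the abelian group $A$. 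The proposition yields a unique group homomorphism $f \colon UM \to A$ with $f \circ s = q$, and since $A$ is abelian $f$ factors uniquely as $f = \bar f \circ p$ for a homomorphism $\bar f \colon (UM)^\ab \to A$. Then $\bar f \circ \bar s = f \circ s = q$, and uniqueness of $\bar f$ follows by combining the uniqueness clause of Proposition~\ref{P:universal property} with the surjectivity of $p$. This establishes that $\bar s$ is universal for quadratic maps into abelian groups.

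The second assertion I would obtain by base change along $\Z \to \F_2$. If $A$ is an abelian group killed by $2$, then every homomorphism $(UM)^\ab \to A$ factors uniquely through the maximal exponent-$2$ quotient $(UM)^\ab \tensor \F_2 = (UM)^\ab / 2(UM)^\ab$, so the natural map induces a bijection $\Hom((UM)^\ab \tensor \F_2, A) \isomto \Hom((UM)^\ab, A)$. Composing this with the bijection of the first part between homomorphisms $(UM)^\ab \to A$ and quadratic maps $M \to A$ shows that $M \to (UM)^\ab \tensor \F_2$ is universal for quadratic maps from $M$ into abelian groups whose image is killed by $2$.

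I expect no real obstacle here: once Proposition~\ref{P:universal property} is available the argument is purely formal, and the only point needing a moment's care is checking that passing to additive notation in the cocycle $\sigma(m)\sigma(m')\sigma(m+m')^{-1}$ merely reverses a sign and hence preserves bilinearity.
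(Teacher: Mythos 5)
Your proposal is correct and is essentially the paper's own argument: the paper presents the corollary as an immediate consequence of Proposition~\ref{P:universal property}, and your write-up simply makes explicit the intended formal deduction (composing $s$ with the abelianization, respectively the mod-$2$ quotient, and invoking their universal properties together with surjectivity of $UM \surjects (UM)^\ab$). The only half-line you elide is that a quadratic map whose image is killed by $2$ factors through the subgroup $A[2]$, which reduces the second claim to targets killed by $2$, exactly the case you handle.
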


\begin{remark}\label{R:abelianization of UM}
\hfill
\begin{enumerate}[\upshape (a)]
\item 
The commutator $[1+m+t,1+m'+t]$ equals $m \tensor m' - m' \tensor m$,
so we have an exact sequence of abelian groups
\[
	0 \to S^2 M \to (UM)^\ab \to M \to 0,
\]
where $S M = \Directsum_{n \ge 0} S^n M$ is the symmetric algebra.
In particular, 
\[
	(UM)^\ab \isom 
	\ker\left( (S^{<3}M)^\times \to (S^{<1}M)^\times \right).
\]
\item
Similarly, if $2M=0$, then $(1+m+t)^2=1+m \tensor m$, 
so we obtain an exact sequence of $\F_2$-vector spaces
\[
	0 \to \Bigwedge{2} M \to (UM)^\ab \tensor \F_2 \to M \to 0,
\]
and
\[
	(UM)^\ab \tensor \F_2 \isom 
	\ker\left( (\Bigwedge{<3}M)^\times \to (\Bigwedge{<1}M)^\times \right).
\]
\end{enumerate}
\end{remark}

\subsection{Sheaves of groups}
\label{S:sheaves of groups}

In the rest of Section~\ref{S:quadratic maps}, $\calC$ is a site.
Let $\GpC$ be the category of sheaves of groups on $\calC$,
and let $\AbC$ be the category of sheaves of abelian groups on $\calC$.
For $M \in \AbC$, write $\HH^i(M)$ for $\Ext^i(\Z,M)$,
where $\Z$ is the constant sheaf;
in other words, $\HH^i(-)$ is the $i^\tH$ right derived functor
of $\Hom(\Z,-)$ on $\AbC$.
For $M \in \GpC$, define $\HH^0(M)$ as $\Hom(\Z,M)$
and define $\HH^1(M)$ in terms of torsors 
as in \cite{Giraud1971}*{\S III.2.4}.
The definitions are compatible 
for $M \in \AbC$ and $i=0,1$ \cite{Giraud1971}*{Remarque~III.3.5.4}.

\begin{remark}
The reader may prefer to imagine the case for which
sheaves are $G$-sets for some group $G$,
abelian sheaves are $\Z G$-modules,
and $\HH^i(M)$ is just group cohomology.
\end{remark}

All the constructions and results of Section~\ref{S:construction}
have sheaf analogues.
In particular, 
for $M \in \AbC$ we obtain $\sheafU M \in \GpC$ fitting in exact sequences
\begin{gather}
\label{E:U sheaf}
	1 \to M \tensor M \to \sheafU M \to M \to 1 \\
\label{E:U^ab sheaf}
	0 \to S^2 M \to (\sheafU M)^\ab \to M \to 0,
\end{gather}
and, if $2M=0$,
\begin{equation}
\label{E:U^ab tensor F2}
	0 \to \Bigwedge{2} M \to (\sheafU M)^\ab \tensor \F_2 \to M \to 0.
\end{equation}

\subsection{Self cup products}
\label{S:cup products}

\begin{theorem}
\label{T:cohomology pairing}
For $M \in \AbC$, the connecting map $\HH^1(M) \to \HH^2(M \tensor M)$
induced by \eqref{E:U sheaf} (see \cite{Giraud1971}*{\S IV.3.4.1})
maps each $x$ to $x \cup x$.
\end{theorem}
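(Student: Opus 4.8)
The plan is to compute both the connecting map of \eqref{E:U sheaf} and the self cup product by explicit cocycles, and to observe that the sheaf $\sheafU M$ was constructed precisely so that the two resulting cocycle formulas coincide on the nose. First I would reduce the statement to a cocycle computation: a class $x \in \HH^1(M)$ is represented, with respect to a suitable covering (or hypercovering) $\{U_i \to U\}$ of the site $\calC$, by a $1$-cocycle $(m_{ij})$ with values in $M$ satisfying the additive cocycle condition $m_{ik} = m_{ij} + m_{jk}$. The whole proof then amounts to identifying two $2$-cocycles with values in $M \tensor M$.

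Next I would unwind the connecting map. Lifting each $m_{ij}$ through the set-theoretic section $s$ of Section~\ref{S:construction} produces a $1$-cochain $\bigl(s(m_{ij})\bigr)$ in $\sheafU M$ lying over $(m_{ij})$. Since $M \tensor M$ is central in $\sheafU M$, the coboundary
\[
	s(m_{ij}) \, s(m_{jk}) \, s(m_{ik})^{-1}
\]
lands in $M \tensor M$ and is a genuine $2$-cocycle whose class is the image of $x$ under the connecting map. Feeding the cocycle relation $m_{ik} = m_{ij} + m_{jk}$ into \eqref{E:cocycle of section} collapses this coboundary to exactly $m_{ij} \tensor m_{jk}$. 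On the other hand, the cup product $\HH^1(M) \times \HH^1(M) \to \HH^2(M \tensor M)$ induced by the bilinear map $(m,m') \mapsto m \tensor m'$ is computed in \Cech\ cohomology by $\bigl((a_{ij}),(b_{ij})\bigr) \mapsto (a_{ij} \tensor b_{jk})$, so $x \cup x$ is represented by the very same $2$-cocycle $m_{ij} \tensor m_{jk}$. Comparing representatives yields the identity, with no intervening signs or correction terms; this cleanliness is the content of \eqref{E:cocycle of section}.

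The main obstacle is not the algebra, which is forced, but the passage from ordinary group cohomology (where the above is immediate from the bar complex) to an arbitrary site, where the connecting map is Giraud's gerbe-theoretic map of \cite{Giraud1971}*{\S IV.3.4.1}. The real work is checking that Giraud's abstract connecting map agrees with the explicit coboundary written above. I would handle this by verifying that $\HH^1$ and $\HH^2$ of the relevant abelian sheaves are computed by \Cech\ cohomology along hypercoverings, and that under this identification both Giraud's connecting map and the cup product assume their standard cochain-level forms; equivalently, since both sides are natural in $M$ and in the covering data, one may trivialize the torsor over the cover and reduce the comparison to a single universal computation on a bar-type resolution, where the identity is precisely the group-cohomology case already treated.
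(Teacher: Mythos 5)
Your proposal is correct, but it is not the route of the paper's official proof; it is essentially the \Cech-cocycle argument that the paper itself sketches in Remark~\ref{R:Cech cochains} as an \emph{alternate} proof. The paper's own proof of Theorem~\ref{T:cohomology pairing} is cocycle-free: it realizes $x$ as an extension $0 \to M \to X \to \Z \to 0$, tensors this with $M$ to get $0 \to M \tensor M \to X \tensor M \to M \to 0$ (whose connecting map is $y \mapsto x \cup y$ by the Yoneda description of cup products), and then builds a three-row commutative diagram linking this row to \eqref{E:U sheaf} through an intermediate central extension $G \subseteq \sheafU M \directsum (X \tensor M)$, together with a torsor $X'$ under the middle quotient lifting $x$; compatibility of connecting maps (\cite{Giraud1971}*{\S IV.3.4.1.1}) then forces the connecting map of \eqref{E:U sheaf} to send $x$ to $x \cup x$. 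Your route instead computes both sides on cocycles, and your algebra is exactly right: the coboundary of the lifted cochain $s(m_{ij})$ collapses via \eqref{E:cocycle of section} and the cocycle relation to $m_{ij} \tensor m_{jk}$, which is also the standard \Cech\ representative of $x \cup x$. The ``real work'' you flag is genuine but is handled by existing references rather than by hypercovering theory: replace $\calC$ by a site with an equivalent topos having fiber products (\cite{Giraud1971}*{Th\'eor\`eme~0.2.6}), use $\HHcech^1(M) \isomto \HH^1(M)$ (\cite{Giraud1971}*{Remarque~III.3.6.5(5)}) to represent $x$ by a \Cech\ $1$-cocycle, and invoke \cite{Giraud1971}*{Corollaire~IV.3.5.4(ii)}, which says precisely that Giraud's connecting map is computed by the coboundary of a lifted $1$-cochain. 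Note that you do not actually need \Cech\ cohomology to compute $\HH^2$: it suffices that both classes arise compatibly as the image of the same \Cech\ $2$-cocycle under the canonical map to $\HH^2$. As for what each approach buys: the paper's diagram argument avoids all site-theoretic reductions and cocycle conventions, while yours is more explicit, makes the role of \eqref{E:cocycle of section} transparent, and is the computation one would naturally do in group cohomology; I would, however, drop your vaguer fallback of reducing ``by naturality'' to a universal bar-resolution computation, since the direct citations above do the job cleanly and that reduction is not straightforward to make precise on an arbitrary site.
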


\begin{proof}
Let $x \in \HH^1(M)=\Ext^1(\Z,M)$.
Let
\begin{equation}
\label{E:X extension}
	0 \to M \to X \stackrel{\alpha}\to \Z \to 0
\end{equation}
be the corresponding extension.
Let $X_1:=\alpha^{-1}(1)$, which is a sheaf of torsors under $M$.

We will construct a commutative diagram 
\begin{equation}
\label{E:extension diagram}
\xymatrix{
1 \ar[r] & M \tensor M \ar[r] & \sheafU M \ar[r]^-{\pi} & M \ar[r] & 1 \\
1 \ar[r] & M \tensor M \ar[r] \ar@{=}[u] \ar@{=}[d] & G \ar[r]^-{\delta} \ar[u] \ar[d] & G' \ar[r] \ar[u] \ar[d] & 1 \\
0 \ar[r] & M \tensor M \ar[r] & X \tensor M \ar[r]^-{\epsilon} & M \ar[r] & 0 \\
}
\end{equation}
of sheaves of groups, with exact rows.
The first row is \eqref{E:U sheaf}.
The last row, obtained by tensoring \eqref{E:X extension} with $M$, 
is exact since $\Z$ is flat.
Let $G$ be the sheaf of $(u,t) \in \sheafU M \directsum (X \tensor M)$ 
such that $\pi(u)=\epsilon(t)$ in $M$.
The vertical homomorphisms emanating from $G$ are the two projections.
Let $\delta \colon G \to \sheafU X$ send $(u,t)$ to $u-t$.
Then $\ker \delta = M \tensor M$, embedded diagonally in $G$.
Let $G'=\delta(G)$.
Explicitly, if $e$ is a section of $X_1$, then $G'$ consists of 
sections of $\sheafU X$ of the form $1+m-e \tensor m + t$ with $m \in M$ and $t \in M \tensor M$.
The vertical homomorphisms emanating from $G'$ are induced by
the map $G \to M$ sending $(u,t)$ to $\pi(u)=\epsilon(t)$.

A calculation shows that $1+X_1 + M \tensor M$ is a right torsor $X'$ under $G'$,
corresponding to some $x' \in \HH^1(G')$.
Moreover, $\sheafU X \to X$ restricts to a {\em torsor} map $X' \to X_1$ 
compatible with $G' \to M$,
so $\HH^1(G') \to \HH^1(M)$ sends $x'$ to $x$.

By \cite{Giraud1971}*{\S IV.3.4.1.1}, 
the commutativity of \eqref{E:extension diagram} shows that 
the image of $x$ under the connecting map
$\HH^1(M) \to \HH^2(M \tensor M)$ from the first row,
equals the image of $x'$ under the connecting map
$\HH^1(G') \to \HH^2(M \tensor M)$ from the second row,
which equals the image of $x$ under the connecting homomorphism
$\HH^1(M) \to \HH^2(M \tensor M)$ from the third row.
This last homomorphism is $y \mapsto x \cup y$,  
so it maps $x$ to $x \cup x$ 
(cf.~\cite{Yoneda1958}, which explains 
this definition of $x \cup y$ 
for extensions of modules over a ring).
\end{proof}

\begin{example}
If $M=\Z/2\Z$, then \eqref{E:U sheaf} is the sequence of constant sheaves
\[
	0 \to \Z/2\Z \to \Z/4\Z \to \Z/2\Z \to 0,
\]
which induces the \defi{Bockstein morphism} 
$\HH^1(X,\Z/2\Z) \to \HH^2(X,\Z/2\Z)$.
So Theorem~\ref{T:cohomology pairing} recovers the known result
that for any topological space $X$, 
the self-cup-product $\HH^1(X,\Z/2\Z) \to \HH^2(X,\Z/2\Z)$ 
is the Bockstein morphism.
(See properties (5) and (7) of Steenrod squares in Section 4.L 
of \cite{Hatcher-Algebraic-Topology}.)
\end{example}

\begin{remark}
\label{R:Cech cochains}
In group cohomology, if we represent a class in $\HH^1(M)$ by a cochain  
$\zeta$, then one can check that the coboundary of $s\circ\zeta$ 
equals the difference of $\zeta\cup \zeta$ and the image of $\zeta$
under the connecting map.  A similar argument using \Cech\ cochains
gives an alternate proof of the general case of 
Theorem~\ref{T:cohomology pairing}, as we now explain.

By \cite{Giraud1971}*{Th\'eor\`eme~0.2.6},
we may replace $\calC$ by a site with one having an equivalent topos
in order to assume that $\calC$ has finite fiber products,
and in particular, a final object $S$.
Then the natural map $\HHcech^1(M) \to \HH^1(M)$ is an isomorphism
\cite{Giraud1971}*{Remarque~III.3.6.5(5)},
so any $x \in \HH^1(M)$ is represented 
by a $1$-cocycle $m$ for some covering $(S'_i \to S)_{i \in I}$.
For simplicity, let us assume that the covering
consists of {\em one} morphism $S' \to S$ 
(the general case is similar).
Let $S'' = S' \times_S S'$, and $S''' = S' \times_S S' \times_S S'$.
Let $\pi_{23},\pi_{13},\pi_{12} \colon S''' \to S''$ be the projections.
So $m \in M(S'')$ satisfies $\pi_{13}^*m = \pi_{12}^*m + \pi_{23}^*m$.
Applying the section $M \to \sheafU M$ yields
a $1$-cochain $1+m \in (\sheafU M)(S'')$.
Its $2$-coboundary in $(M \tensor M)(S''') \subseteq (\sheafU M)(S''')$ 
represents the image of $x$ under the connecting map
$\HH^1(M) \to \HH^2(M \tensor M)$ \cite{Giraud1971}*{Corollaire~IV.3.5.4(ii)}.
On the other hand, the definition of the $2$-coboundary given
in \cite{Giraud1971}*{Corollaire~IV.3.5.4}
together with \eqref{E:cocycle of section}
shows that it is $\pi^*_{12}m \tensor \pi^*_{23}m \in (M \tensor M)(S''')$,
whose class in $\HH^2(M \tensor M)$ represents $x \cup x$, by definition.
\end{remark}

Let $M,N \in \AbC$.
Let $\beta \in \HH^0(\HOM(M \tensor M,N))$.
(The bold face in $\HOM$, $\EXT$, etc., indicates that we mean
the sheaf versions.)
Using $\beta$, construct an exact sequence
\begin{equation}
  \label{E:sheafU_beta}
	0 \to N \to \sheafU_\beta \to M \to 0
\end{equation}
as the pushout of~\eqref{E:U sheaf} by $\beta \colon M \tensor M \to N$.

If $\beta$ is symmetric, then $\sheafU_\beta$ is abelian,
and we let $\epsilon_\beta$ be the class of \eqref{E:sheafU_beta}
in $\Ext^1(M,N)$.
If $\beta$ is symmetric and $\EXT^1(M,N)=0$, 
then applying $\HOM(-,N)$ to \eqref{E:U^ab sheaf} yields
\[
	0 \to \HOM(M,N) \to \HOM((\sheafU M)^\ab,N) \to \HOM(S^2 M,N) \to 0
\]
and a connecting homomorphism sends $\beta \in \HH^0(\HOM(S^2 M,N))$
to an element $c_\beta \in \HH^1(\HOM(M,N))$.

\begin{corollary}
\label{C:4 homomorphisms}
Then the following maps $\HH^1(M) \to \HH^2(N)$ are the same,
when defined:
\begin{enumerate}[\upshape (a)]
\item
\label{I:4 homomorphisms cup product}
The composition
\[
	\HH^1(M) \stackrel{\Delta}\longrightarrow
	\HH^1(M) \times \HH^1(M) \stackrel{\cup}\longrightarrow
	\HH^2(M \tensor M) \stackrel{\beta}\longrightarrow
	\HH^2(N).
\]
\item 
\label{I:4 homomorphisms connecting}
The connecting homomorphism $\HH^1(M) \to \HH^2(N)$ associated 
to~\eqref{E:sheafU_beta}.
\item
\label{I:4 homomorphisms Yoneda}
The pairing with $\epsilon_\beta$ under the Yoneda product
\[
	\Ext^1(M,N) \times \HH^1(M) \to \HH^2(N)
\]
(if $\beta$ is symmetric).
\item
\label{I:4 homomorphisms evaluation}
The pairing with $c_\beta$ under the evaluation cup product
\[
	\HH^1(\HOM(M,N)) \times \HH^1(M) \to \HH^2(N)
\]
(if $\beta$ is symmetric and $\EXT^1(M,N)=0$).
\end{enumerate}
\end{corollary}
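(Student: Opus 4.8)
The plan is to prove the three equalities (a)$=$(b), (b)$=$(c), and (c)$=$(d) in turn. The first is the heart of the matter, since it is where Theorem~\ref{T:cohomology pairing} enters, while the last is the most delicate. To establish (a)$=$(b), I would use that \eqref{E:sheafU_beta} is by construction the pushout of the central extension \eqref{E:U sheaf} along $\beta\colon M\tensor M\to N$. Functoriality of connecting maps under such a pushout (the compatibility \cite{Giraud1971}*{\S IV.3.4.1.1} already invoked in the proof of Theorem~\ref{T:cohomology pairing}) then shows that the connecting homomorphism attached to \eqref{E:sheafU_beta} factors as
\[
	\HH^1(M)\xrightarrow{\ \partial\ }\HH^2(M\tensor M)\xrightarrow{\ \beta_*\ }\HH^2(N),
\]
where $\partial$ is the connecting map of \eqref{E:U sheaf}. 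By Theorem~\ref{T:cohomology pairing} we have $\partial(x)=x\cup x$, so (b) sends $x$ to $\beta_*(x\cup x)$, which is precisely (a); note that this step uses no symmetry, matching the fact that (a) and (b) are defined unconditionally. For (b)$=$(c), observe that when $\beta$ is symmetric the extension \eqref{E:sheafU_beta} lies in $\AbC$, so its connecting homomorphism $\HH^1(M)\to\HH^2(N)$ is Yoneda multiplication by its class $\epsilon_\beta\in\Ext^1(M,N)$, by the standard description of the boundary map in the long exact $\Ext$ sequence.

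The hard part is (c)$=$(d), which compares the global class $\epsilon_\beta\in\Ext^1(M,N)$ with the class $c_\beta\in\HH^1(\HOM(M,N))$ assembled from local data. I would run this comparison through the local-to-global spectral sequence $\HH^p(\EXT^q(M,N))\Rightarrow\Ext^{p+q}(M,N)$; since $\EXT^1(M,N)=0$, its edge map $e\colon\HH^1(\HOM(M,N))\to\Ext^1(M,N)$ is an isomorphism. Two compatibilities then suffice. First, writing $\epsilon_0\in\Ext^1(M,S^2M)$ for the class of \eqref{E:U^ab sheaf} and regarding the symmetric $\beta$ as a section of $\HOM(S^2M,N)$, the boundary map producing $c_\beta$ satisfies $e(c_\beta)=\beta_*\epsilon_0=\epsilon_\beta$, because applying $\HOM(-,N)$ to an extension and passing to the edge recovers Yoneda multiplication by that extension's class. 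Second, the evaluation cup product $\HH^1(\HOM(M,N))\times\HH^1(M)\to\HH^2(N)$ agrees, after applying $e$ to its first argument, with the Yoneda pairing $\Ext^1(M,N)\times\HH^1(M)\to\HH^2(N)$. Combining the two, pairing $x$ with $c_\beta$ equals Yoneda multiplication of $x$ by $\epsilon_\beta$, which is exactly (c).

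I expect the main obstacle to lie in these last two compatibilities: each is an instance of the principle that the edge homomorphisms of the local-to-global $\Ext$ spectral sequence intertwine the $\HH$-connecting maps and evaluation cup products with Yoneda composition, but pinning this down cleanly requires unwinding the identification $\Ext^\bullet(-,N)=R\Gamma\,R\HOM(-,N)$ and checking that the distinguished triangle attached to \eqref{E:U^ab sheaf} induces the same boundary under both descriptions. With these in hand, the three equalities chain together to yield the asserted identity of all four maps.
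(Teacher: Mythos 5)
Your proposal is correct and takes essentially the same approach as the paper: equality of (a) and (b) via Theorem~\ref{T:cohomology pairing} together with functoriality of connecting maps under the pushout defining~\eqref{E:sheafU_beta}, and equality of (b), (c), (d) by standard homological algebra. Your unwinding of (c)$=$(d) through the edge maps of the local-to-global $\Ext$ spectral sequence (using $\EXT^1(M,N)=0$) is just an explicit rendering of what the paper's proof leaves as ``standard,'' not a different route.
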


\begin{proof}
Theorem~\ref{T:cohomology pairing}
and functoriality implies
the equality of \eqref{I:4 homomorphisms cup product}
and \eqref{I:4 homomorphisms connecting}.
Standard homological algebra gives equality of
\eqref{I:4 homomorphisms connecting},
\eqref{I:4 homomorphisms Yoneda}, and
\eqref{I:4 homomorphisms evaluation}.
\end{proof}

\subsection{Commutator pairings}
\label{S:commutator pairings}

\begin{proposition}
\label{P:commutator pairing}
Let $1 \to A \to B \stackrel{\rho}\to C \to 1$ be an exact sequence
in $\GpC$, with $A$ central in $B$, and $C$ abelian.
Let $q \colon \HH^1(C) \to \HH^2(A)$ be the connecting map.
Given $c_1,c_2 \in C$, we can lift them locally to $b_1,b_2$
and form their commutator $[b_1,b_2]:=b_1 b_2 b_1^{-1} b_2^{-1} \in A$;
this induces a homomorphism $[\;,\;]\colon C \tensor C \to A$.
For $\gamma_1,\gamma_2 \in \HH^1(C)$,
we have that $q(\gamma_1+\gamma_2)-q(\gamma_1)-q(\gamma_2)$ 
equals the image of $-\gamma_1 \cup \gamma_2$ 
under the homomorphism $\HH^2(C \tensor C) \to \HH^2(A)$
induced by $[\;,\;]$.
\end{proposition}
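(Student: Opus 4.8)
The plan is to prove the identity directly on \Cech\ cocycles, reusing the reduction already invoked in Remark~\ref{R:Cech cochains}. By \cite{Giraud1971}*{Th\'eor\`eme~0.2.6} I may assume $\calC$ has finite fiber products and a final object $S$, and since $\HHcech^1 \to \HH^1$ is an isomorphism I may represent $\gamma_1,\gamma_2$ by $1$-cocycles $c_1,c_2\in C(S'')$ for a single covering $S'\to S$, writing $S''=S'\times_S S'$ and $S'''=S'\times_S S'\times_S S'$ with projections $\pi_{12},\pi_{13},\pi_{23}\colon S'''\to S''$ (the general covering being handled identically). As $\rho$ is an epimorphism of sheaves, after refining I may lift $c_1,c_2$ to $1$-cochains $b_1,b_2\in B(S'')$ with $\rho(b_i)=c_i$. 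The connecting map $q$ sends the class of a $C$-valued $1$-cocycle to the class of the $A$-valued $2$-cocycle recording the failure of any $B$-lift to be a cocycle, so $q(\gamma_i)$ is represented by $\partial b_i:=\pi_{12}^*b_i\cdot\pi_{23}^*b_i\cdot(\pi_{13}^*b_i)^{-1}$, which lies in $A(S''')$ because $\rho(\partial b_i)=\partial c_i=1$ and $C$ is abelian.

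First I would observe that, $C$ being abelian, $c_1c_2$ represents $\gamma_1+\gamma_2$ and $b_1b_2$ is a $B$-lift of it, so $q(\gamma_1+\gamma_2)$ is represented by $\partial(b_1b_2)$. The heart of the proof is a direct rearrangement of this cocycle. Expanding the three pullbacks of $b_1b_2$ and then moving $\pi_{12}^*b_2$ past $\pi_{23}^*b_1$ by means of $xy=[x,y]\,yx$ frees a single commutator $[\pi_{12}^*b_2,\pi_{23}^*b_1]$; since $A$ is central in $B$ this commutator, as well as the $A$-valued factors $\partial b_1$ and $\partial b_2$, may be extracted and freely permuted, leaving
\[
	\partial(b_1b_2)=[\pi_{12}^*b_2,\pi_{23}^*b_1]\cdot\partial b_1\cdot\partial b_2 .
\]
Passing to cohomology classes then gives
\[
	q(\gamma_1+\gamma_2)-q(\gamma_1)-q(\gamma_2)=\big[\,[\pi_{12}^*b_2,\pi_{23}^*b_1]\,\big]\in\HH^2(A).
\]

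It then remains to recognize the right-hand side as coming from a cup product. By the very definition of the induced homomorphism $[\;,\;]\colon C\tensor C\to A$, the section $[\pi_{12}^*b_2,\pi_{23}^*b_1]$ is the image under $[\;,\;]$ of $\pi_{12}^*c_2\tensor\pi_{23}^*c_1$, which is the standard \Cech\ representative of $\gamma_2\cup\gamma_1$; thus $q(\gamma_1+\gamma_2)-q(\gamma_1)-q(\gamma_2)$ is the image of $\gamma_2\cup\gamma_1$ under $[\;,\;]$. To put this into the form stated, I would use that the commutator pairing is alternating, so $[\;,\;]$ is antisymmetric and $[\;,\;]\circ\tau=-[\;,\;]$ for the factor-swap $\tau$ on $C\tensor C$, together with the graded-commutativity relation $\tau_*(\gamma_1\cup\gamma_2)=-(\gamma_2\cup\gamma_1)$ in $\HH^2(C\tensor C)$. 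The latter I would prove at the cochain level by exhibiting an explicit coboundary: for a bilinear $e$ and cocycles $a,b$ one checks $e(\pi_{12}^*a,\pi_{23}^*b)+e(\pi_{23}^*a,\pi_{12}^*b)=-\partial\big(e(a,b)\big)$ by expanding $e$ on $\pi_{13}^*a=\pi_{12}^*a+\pi_{23}^*a$ and likewise for $b$, and then specializing $e$ to the tensor product. Combining these identifications rewrites the image of $\gamma_2\cup\gamma_1$ in terms of $\gamma_1\cup\gamma_2$, yielding, once the conventions above are fixed, the $-\gamma_1\cup\gamma_2$ of the statement.

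The step I expect to be the main obstacle is exactly this last piece of bookkeeping: the precise sign and the reconciliation of the factor order $\gamma_2\cup\gamma_1$ produced by the computation with the $-\gamma_1\cup\gamma_2$ of the statement hold only once the sign conventions for Giraud's connecting map \cite{Giraud1971}*{\S IV.3.4} and for the cup product are fixed consistently, and rest on both the antisymmetry of $[\;,\;]$ and graded-commutativity. As normalization checks I would verify the two extreme cases: when $B$ is abelian the left side is additive and $[\;,\;]=0$, so both sides vanish; and, applied to the universal extension~\eqref{E:U sheaf}, whose commutator pairing is $m\tensor m'\mapsto m\tensor m'-m'\tensor m$ by Remark~\ref{R:abelianization of UM}(a), the identity is compatible with the polarization of the self cup product $x\mapsto x\cup x$ of Theorem~\ref{T:cohomology pairing}.
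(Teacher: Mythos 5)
Your route is not the paper's: there, Proposition~\ref{P:commutator pairing} is proved by pulling back $1 \to A^3 \to B^3 \to C^3 \to 1$ along $(c_1,c_2)\mapsto(c_1,c_2,c_1+c_2)$, pushing out along $(a_1,a_2,a_3)\mapsto a_3-a_2-a_1$, mapping the sequence \eqref{E:U sheaf} for $C^2$ into the resulting extension via Proposition~\ref{P:universal property}, and then quoting Theorem~\ref{T:cohomology pairing}. What you propose is instead the \Cech-cocycle (Zarhin-style) computation that the remark immediately following the proposition explicitly says can be adapted to the general case, so the route is legitimate and acknowledged. Its core is also executed correctly: with the conventions pinned down in Remark~\ref{R:Cech cochains} (coboundary $\partial b=\pi_{12}^*b\cdot\pi_{23}^*b\cdot(\pi_{13}^*b)^{-1}$, cup product represented by $\pi_{12}^*\tensor\pi_{23}^*$), your identity $\partial(b_1b_2)=[\pi_{12}^*b_2,\pi_{23}^*b_1]\cdot\partial b_1\cdot\partial b_2$ is right, and it proves that $q(\gamma_1+\gamma_2)-q(\gamma_1)-q(\gamma_2)$ equals the image of $\gamma_2\cup\gamma_1$ under the map induced by $[\;,\;]$.

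The genuine gap is the final conversion, which you yourself flag as the main obstacle and defer to ``fixing conventions'': it cannot be fixed as described. The two facts you invoke are each correct---antisymmetry $[\;,\;]\circ\tau=-[\;,\;]$ and graded commutativity $\tau_*(\gamma_1\cup\gamma_2)=-\gamma_2\cup\gamma_1$---but when you compose them the two minus signs cancel:
\[
	[\;,\;]_*(\gamma_2\cup\gamma_1)
	=-\,([\;,\;]\circ\tau)_*(\gamma_1\cup\gamma_2)
	=+\,[\;,\;]_*(\gamma_1\cup\gamma_2).
\]
Pushing forward along an antisymmetric pairing makes the two orders of cup product have the \emph{same} image, so no bookkeeping with these two lemmas can produce the stated minus sign; what your computation actually establishes is the identity with $+\gamma_1\cup\gamma_2$ (equivalently $\gamma_2\cup\gamma_1$), which agrees with the statement only when the image of $[\;,\;]_*$ is killed by $2$. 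Your own second normalization check, had you carried it out, detects exactly this: for \eqref{E:U sheaf} one has $q(x)=x\cup x$ and $[\;,\;]=\id-\tau$ by Remark~\ref{R:abelianization of UM}(a), so the left side is $x_1\cup x_2+x_2\cup x_1$, whereas the image of $-x_1\cup x_2$ under $\id-\tau$ is $-x_1\cup x_2+\tau_*(x_1\cup x_2)=-(x_1\cup x_2+x_2\cup x_1)$; a group-cohomology Heisenberg example ($G=\Z^2$ acting trivially on the mod-$p$ Heisenberg extension, $p$ odd) likewise gives the $+$ sign. Note that the paper's own diagram chase lands in the same place---its left vertical map applied to the \Cech\ representative of the self cup product gives $[\pi_{12}^*c_2,\pi_{23}^*c_1]$, the representative of the image of $\gamma_2\cup\gamma_1$---so your computation is not in conflict with the paper's intermediate steps; the tension is only with the final sign, which is invisible in all of the paper's applications because there the pairing takes values in $\boldmu_2$. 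To make your write-up correct, either conclude with the $+\gamma_1\cup\gamma_2$ (or $\gamma_2\cup\gamma_1$) form, or add the hypothesis that the commutator pairing takes values in the $2$-torsion of $A$, under which your argument is complete as written.
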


\begin{proof}
That the commutator induces a homomorphism is a well-known
simple computation.
Pulling back $1 \to A^3 \to B^3 \to C^3 \to 1$
by the homomorphism $C^2 \to C^3$ sending $(c_1,c_2)$ to $(c_1,c_2,c_1+c_2)$
and then pushing out 
by the homomorphism $A^3 \to A$ sending $(a_1,a_2,a_3)$ to $a_3-a_2-a_1$
yields an exact sequence $1 \to A \to Q \to C^2 \to 1$.
Here $Q = B'/B''$ 
where $B'$ is the subgroup sheaf of $(b_1,b_2,b_3) \in B^3$
satisfying $\rho(b_3)=\rho(b_1)+\rho(b_2)$,
and $B''$ is the subgroup sheaf of $B^3$ generated by 
sections $(a_1,a_2,a_3) \in A^3$ with $a_3=a_1+a_2$.
The surjection $Q \to C^2$ admits a section $\sigma \colon C^2 \to Q$
defined locally as follows: given $(c_1,c_2)$ lifting to 
$(b_1,b_2) \in B^2$,
send it to the image of $(b_1,b_2,b_1 b_2)$ in $Q$
(this is independent of the choice of lifts, since we work modulo $B''$).
A calculation shows that
\begin{equation}
\label{E:cocycle}
\sigma((c_1',c_2')) \; \sigma((c_1,c_2) + (c_1',c_2'))^{-1} \; \sigma((c_1,c_2))
= [c_1',c_2^{-1}] = -[c_1',c_2]
\end{equation}
in $A$, and the three factors on the left may be rotated
since the right hand side is central in $Q$.
Proposition~\ref{P:universal property} and \eqref{E:cocycle}
yield the middle vertical map
in the commutative diagram 
\begin{equation}
\label{E:weird}
\xymatrix{
1 \ar[r] & C^2 \tensor C^2 \ar[r] \ar[d] & \sheafU(C^2) \ar[r] \ar[d] & C^2 \ar[r] \ar@{=}[d] & 1 \\
1 \ar[r] & A \ar[r] & Q \ar[r] & C^2 \ar[r] & 1 \\
}  
\end{equation}
with exact rows,
and the left vertical map sends $(c_1,c_2) \tensor (c_1',c_2')$ 
to $-[c_1',c_2]$.
The connecting map for the first row sends
$(\gamma_1,\gamma_2) \in \HH^1(C^2)$
to $(\gamma_1,\gamma_2) \cup (\gamma_1,\gamma_2) \in \HH^2(C^2 \tensor C^2)$,
by Theorem~\ref{T:cohomology pairing}.
The connecting map for the second row
is a composition $\HH^1(C^2) \to \HH^1(C^3) \to \HH^2(A^3) \to \HH^2(A)$,
so it maps $(\gamma_1,\gamma_2)$
to $q(\gamma_1+\gamma_2)-q(\gamma_1)-q(\gamma_2)$.
Finally, the left vertical map
sends $(\gamma_1,\gamma_2) \cup (\gamma_1,\gamma_2) \in \HH^2(C^2 \tensor C^2)$
to the image of $-\gamma_1 \cup \gamma_2$ under the commutator pairing
$\HH^2(C \tensor C) \to \HH^2(A)$.
So compatibility of the connecting maps yields the result.
\end{proof}

\begin{remark}
Yu.~Zarhin~\cite{Zarhin1974} proved Proposition~\ref{P:commutator pairing} 
in the special case of group cohomology,
by an explicit calculation with cocycles.
Using the approach of Remark~\ref{R:Cech cochains},
that argument can be adapted to give a second proof of
Proposition~\ref{P:commutator pairing} in the general case.
\end{remark}

%****************************************************************************
\section{Abelian schemes}
\label{S:abelian schemes}

\subsection{The relative Picard functor}

Let $A \to S$ be an abelian scheme.
Let $\PIC_{A/S}$ be its relative Picard functor on the big fppf site of $S$.
Trivialization along the identity section shows that
$\PIC_{A/S}(T) \isom \Pic(A \times_S T)/\Pic T$ for each $S$-scheme $T$
(see Proposition~4 on page~204 of 
\cite{Bosch-Lutkebohmert-Raynaud1990}).
We generally identify line sheaves with their classes in $\PIC$.
For an $S$-scheme $T$ and $a \in A(T)$, let
\begin{align*}
	\tau_a\colon A_T &\to A_T \\
	x &\mapsto a+x
\end{align*}
be the translation-by-$a$ morphism.
Given a line sheaf $\LL$ on $A$, the theorem of the square implies that
\begin{align}
\label{E:definition of phi_L}
	\phi_\LL \colon A &\to \PIC_{A/S} \\
\nonumber
	a &\mapsto \tau_a^* \LL \tensor \LL^{-1}
\end{align}
is a homomorphism.
If we vary the base and vary $\LL$, we obtain a homomorphism of fppf-sheaves
\begin{align*}
	\PIC_{A/S} &\to \HOM(A,\PIC_{A/S}) \\
	\LL &\mapsto \phi_\LL.
\end{align*}
Its kernel is denoted $\PIC^0_{A/S}$.
Using the fact that $\PIC_{A/S}$ is an algebraic space,
and the fact that $\PIC^0_{A/S}$ is an open subfunctor of $\PIC_{A/S}$ 
(which follows from \cite{SGA6}*{Expos\'e~XIII,~Th\'eor\`eme~4.7}),
one can show that $\PIC^0_{A/S}$ is another abelian scheme $\Adual$ over $S$ 
\cite{Faltings-Chai1990}*{p.~3}.
The image of $\phi_\LL$ is contained in $\Adual$,
so we may view $\phi_\LL$ as a homomorphism $A \to \Adual$.
Moreover, $\phi_\LL$ equals its dual homomorphism $\widehat{\phi}_\LL$.
In fact, we have an exact sequence of fppf-sheaves
\begin{equation}
\label{E:NS sequence}
	0 \to \Adual \to \PIC_{A/S} \to \HOM_{\textup{self-dual}}(A,\Adual) \to 0.
\end{equation}

\begin{remark}
Let $k$ be a field, let $k_s$ be a separable closure
contained in an algebraic closure $\kbar$,
and let $G_k=\Gal(k_s/k)$.
For an abelian variety $A$ over $k$,
the group $\Hom_{\textup{self-dual}}(A,\Adual)$
of global sections of $\HOM_{\textup{self-dual}}(A,\Adual)$
may be identified with the $G_k$-invariant subgroup
of the N\'eron-Severi group $\NS A_{k_s}$.
(For the case $k=\kbar$ see \cite{MumfordAV1970}, in particular 
Corollary~2 on page~178
and Theorem~2 on page~188 and the remark following it.
The general case follows because any homomorphism
defined over $\kbar$ is in fact defined over $k_s$,
since it maps the Zariski-dense set of prime-to-$(\Char k)$ 
torsion points in $A(k_s)$ to points in $\Adual(k_s)$.)
\end{remark}

For any homomorphism of abelian schemes $\lambda \colon A \to B$,
let $A[\lambda]:=\ker \lambda$.

\subsection{Symmetric line sheaves}

Multiplication by an integer $n$ on $A$ induces a pullback homomorphism
$[n]^* \colon \PIC_{A/S} \to \PIC_{A/S}$.
Let $\PIC_{A/S}^{\Sym}$ be the kernel of $[-1]^*-[1]^*$ on $\PIC_{A/S}$.
More concretely, because $A \to S$ has a section, we have
$\PIC_{A/S}^{\Sym}(T) = \Pic^{\Sym}(A \times_S T)/\Pic T$
for each $S$-scheme $T$,
where $\Pic^{\Sym}(A \times_S T)$ is the group of
isomorphism classes of symmetric line sheaves on $A \times_S T$.
Since $[-1]^*$ acts as $-1$ on $\Adual$ and as $+1$ on
$\HOM_{\textup{self-dual}}(A,\Adual)$,
and since multiplication-by-$2$ on $\Adual$ is surjective,
the snake lemma applied to $[-1]^*-[1]^*$ acting on \eqref{E:NS sequence} 
yields an exact sequence
\begin{equation}
\label{E:Pic^Sym}
	0 \to \Adual[2] \to \PIC_{A/S}^{\Sym} 
		\to \HOM_{\textup{self-dual}}(A,\Adual) \to 0.
\end{equation}

\subsection{The Weil pairing}
\label{S:Weil pairing}

We recall some facts and definitions
that can be found in~\cite{Polishchuk2003}*{\S10.4}, for example.
(In that book, $S$ is $\Spec k$ for a field $k$, but the same arguments apply
over an arbitrary base scheme.)
Given an abelian scheme $A$ over $S$,
there is a \defi{Weil pairing}
\begin{equation}
\label{E:Weil pairing}
	e_2 \colon A[2] \times \Adual[2] \to \G_m.
\end{equation}
For any homomorphism $\lambda \colon A \to \Adual$,
define $e_2^\lambda \colon A[2] \times A[2] \to \G_m$
by $e_2^\lambda(x,y)=e_2(x,\lambda y)$.
If $\LL \in \PIC_{A/S}(S)$, let $e_2^\LL=e_2^{\phi_\LL}$;
this is an alternating bilinear pairing,
and hence it is also symmetric.

\subsection{Quadratic refinements of the Weil pairing}

\begin{proposition}
\label{P:properties of q}
There is a (not necessarily bilinear) pairing of fppf sheaves
\[
	\qq \colon A[2] \times \PIC_{A/S}^{\Sym} \to \boldmu_2 \subset \G_m
\]
such that:
\begin{enumerate}[\upshape (a)]
\item
\label{I:properties of q additivity} 
The pairing is additive in the second argument: 
$\qq(x,\LL \tensor \LL') = \qq(x,\LL) \qq(x,\LL')$.
\item
\label{I:properties of q Weil pairing} 
The restriction of $\qq$ to a pairing
\[
	\qq \colon A[2] \times \Adual[2] \to \G_m
\]
is the Weil pairing $e_2$.  In particular, this restriction is bilinear.
\item 
\label{I:properties of q quadratic} 
In general, $\qq(x+y,\LL)=\qq(x,\LL) \qq(y,\LL) e_2^\LL(x,y)$.
\end{enumerate}
\end{proposition}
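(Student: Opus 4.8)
The plan is to construct $\qq$ by transporting the canonical symmetric structure on a symmetric line sheaf to the fibers over the $2$-torsion points --- this is Mumford's function $e_*$ in relative form --- and then to deduce \eqref{I:properties of q additivity} formally, \eqref{I:properties of q quadratic} from the theorem of the square, and \eqref{I:properties of q Weil pairing} by comparison with the chosen model of the Weil pairing. \emph{Construction.} Let $\LL$ be a symmetric line sheaf on $A_T$ representing a section of $\PIC^{\Sym}_{A/S}$ over an $S$-scheme $T$. Since $p_*\calO_{A_T} = \calO_T$ for the abelian scheme $p\colon A_T \to T$, there is a unique \emph{normalized} symmetry isomorphism $\iota_\LL\colon [-1]^*\LL \isomto \LL$ that is the identity on the fiber along the zero section; uniqueness forces the involution identity $\iota_\LL\circ[-1]^*\iota_\LL = \id$. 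For $x\in A[2](T)$ we have $[-1]\circ x = x$, so $x^*\iota_\LL$ is an automorphism of the invertible sheaf $x^*\LL$ on $T$, that is, a unit, and the involution identity shows its square is $1$. I would set $\qq(x,\LL):=x^*\iota_\LL\in\boldmu_2(T)$. This is unchanged upon replacing $\LL$ by $\LL\tensor p^*N$, since the normalized symmetry isomorphism of $p^*N$ is the identity; hence $\qq$ is well defined on $A[2]\times\PIC^{\Sym}_{A/S}$.

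\emph{Properties \eqref{I:properties of q additivity} and \eqref{I:properties of q quadratic}.} Additivity in the second argument is immediate: by uniqueness the normalized symmetry isomorphism of $\LL\tensor\LL'$ is $\iota_\LL\tensor\iota_{\LL'}$, and pulling back along $x$ multiplies the associated units. For \eqref{I:properties of q quadratic} I would write $x+y=\tau_x(y)$, so $\qq(x+y,\LL)=y^*\tau_x^*\iota_\LL$. Because $[-1]\circ\tau_x=\tau_x\circ[-1]$ for $x\in A[2]$, the isomorphism $\tau_x^*\iota_\LL$ is a symmetry isomorphism for $\tau_x^*\LL\isom\LL\tensor\phi_\LL(x)$, and evaluating along the zero section identifies it with $\qq(x,\LL)$ times the normalized one. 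Thus $\qq(x+y,\LL)=\qq(x,\LL)\,\qq(y,\LL\tensor\phi_\LL(x))$; since $\phi_\LL(x)\in\Adual[2]$, parts \eqref{I:properties of q additivity} and \eqref{I:properties of q Weil pairing} turn the second factor into $\qq(y,\LL)\,e_2(y,\phi_\LL(x))=\qq(y,\LL)\,e_2^\LL(x,y)$, using that $e_2^\LL$ is symmetric. When $\LL$ is translation-invariant one has $\phi_\LL=0$, so the same computation shows that $\qq(\cdot,\LL)$ is additive in the first argument without invoking \eqref{I:properties of q Weil pairing}.

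\emph{Property \eqref{I:properties of q Weil pairing} is the main obstacle}, as it requires matching the fiber-theoretic construction of $\qq$ with the Weil pairing $e_2$ taken from \cite{Polishchuk2003}*{\S10.4}. An element of $\Adual=\PIC^0_{A/S}$ that is symmetric is exactly an element $P$ of $\Adual[2]$, characterized by $P^{\tensor 2}\isom\calO$, and by the additivity noted above $\qq(\cdot,P)$ is already bilinear on $A[2]\times\Adual[2]$, so it suffices to identify two bilinear pairings. I would do this through the theta-group data: the normalized symmetry isomorphism $\iota_P$ together with the canonical trivialization of $P^{\tensor 2}$ exhibits $x^*\iota_P$ as the discrepancy between the two natural isomorphisms $\tau_x^*P^{\tensor 2}\isom\calO$, which is precisely the definition of $e_2(x,P)$. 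Carrying out this comparison carefully, and verifying that every step is compatible with base change so that the identity holds for fppf sheaves and not merely over a field, is where the real work lies; over a field it is Mumford's classical identification of $e_*$ with the Weil pairing.
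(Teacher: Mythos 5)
Your construction---the normalized symmetry isomorphism $\iota_\LL \colon [-1]^*\LL \isomto \LL$ evaluated at $2$-torsion points, i.e., Mumford's $e_*$ in relative form, together with its formal consequences for parts \eqref{I:properties of q additivity} and \eqref{I:properties of q quadratic}---is exactly the argument of \cite{Polishchuk2003}*{\S 13.1}, which is the entirety of what the paper offers as its proof (a bare citation). So your proposal is correct and takes essentially the same route; the one step you leave as ``the real work,'' namely identifying $\qq(\cdot,P)$ with $e_2(\cdot,P)$ for $P \in \Adual[2]$ in part \eqref{I:properties of q Weil pairing}, is precisely the classical Mumford comparison carried out in that reference.
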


\begin{proof}
See \cite{Polishchuk2003}*{\S13.1}.
\end{proof}

We can summarize Proposition~\ref{P:properties of q}
in the commutative diagram
\begin{equation}
\label{E:commutative diagram}
\xymatrix{
0 \ar[r] & \Adual[2] \ar[r] \ar[d]_{e_2}^{\wr} & \PIC_{A/S}^{\Sym} \ar[r] \ar[d]_{\qq} & \HOM_{\textup{self-dual}}(A,\Adual) \ar[r] \ar[d]_{e_2^\bullet} & 0 \\
0 \ar[r] & \HOM(A[2],\G_m) \ar[r] & \HOM((\sheafU A[2])^\ab \tensor \F_2,\G_m) \ar[r] & \textstyle{\HOM(\Bigwedge{2} A[2],\G_m)} \ar[r] & 0, \\
}
\end{equation}
which we now explain.
The top row is \eqref{E:Pic^Sym}.
The bottom row is obtained by applying $\HOM(-,\G_m)$
to \eqref{E:U^ab tensor F2} for $M=A[2]$,
and using $\EXT^1(A[2],\G_m)=0$ 
(a special case of~\cite{Waterhouse1971}*{Theorem~1, with the argument of \S3 to change fpqc to fppf}).
The vertical maps are 
the map $y \mapsto e_2(-,y)$,
the map sending $\LL$ to the homomorphism 
$(\sheafU A[2])^\ab \tensor \F_2 \to \G_m$
corresponding to $\qq(-,\LL)$ 
(see Corollary~\ref{C:universal property for quadratic maps}),
and the map $\lambda \mapsto e_2^\lambda$, respectively.
Commutativity of the two squares are given by 
\eqref{I:properties of q Weil pairing}
and
\eqref{I:properties of q quadratic}
in Proposition~\ref{P:properties of q}, respectively.

The top row of \eqref{E:commutative diagram} gives a homomorphism
\begin{align}
\nonumber  \Hom_{\textup{self-dual}}(A,\Adual) &\to \HH^1(\Adual[2]) \\
\label{E:c_lambda}	\lambda &\mapsto c_\lambda.
\end{align}
We may interpret $c_\lambda$ geometrically
as the class of the torsor under $\Adual[2]$ 
that parametrizes symmetric line sheaves $\LL$ with $\phi_\LL=\lambda$ 
(cf.~\cite{Polishchuk2003}*{\S13.5}).
Thus $c_\lambda$ is the obstruction to finding $\LL \in \Pic^{\Sym} A$ 
with $\phi_\LL=\lambda$.

\begin{remark}
\label{R:c_lambda and c_lambda}
The map $\HH^1(\Adual[2]) \to \HH^1(\Adual)$
sends $c_\lambda$ to the element called $c_\lambda$
in~\cite{Poonen-Stoll1999}*{\S4}.
If $k$ is a global field and $S=\Spec k$,
then this and~\cite{Poonen-Stoll1999}*{Corollary~2}
imply that our $c_\lambda$ lies in the $2$-Selmer group of $\Adual$.
\end{remark}

\begin{theorem}
\label{T:cohomology pairing on abelian scheme}
For any $\lambda \in \Hom_{\textup{self-dual}}(A,\Adual)$.
and any $x \in \HH^1(A[2])$, we have
\begin{equation}
\label{E:cup products for abelian scheme}
	x \underset{e_2^\lambda}{\cup} x = x \underset{e_2}{\cup} c_\lambda
\end{equation}
in $\HH^2(\G_m)$, where the cup products are induced
by the pairings underneath.
\end{theorem}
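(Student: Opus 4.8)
The plan is to deduce the identity from Corollary~\ref{C:4 homomorphisms}, applied with $M=A[2]$, $N=\G_m$, and $\beta=e_2^\lambda$. First I would verify the hypotheses. Because $\lambda$ is self-dual it is fppf-locally of the form $\phi_\LL$ by the exactness of \eqref{E:NS sequence}, so $e_2^\lambda$ is fppf-locally equal to some $e_2^\LL$; hence $e_2^\lambda$ is alternating, and in particular symmetric. Moreover $\EXT^1(A[2],\G_m)=0$, so all four maps of the corollary are defined. The left-hand side $x \underset{e_2^\lambda}{\cup} x$ is exactly map~\eqref{I:4 homomorphisms cup product} of the corollary evaluated at $x$; hence by the equality of \eqref{I:4 homomorphisms cup product} and \eqref{I:4 homomorphisms evaluation} it equals the evaluation cup product $c_{e_2^\lambda} \cup x$, where $c_{e_2^\lambda}\in \HH^1(\HOM(A[2],\G_m))$ is the class produced by the corollary.

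The next step is to identify $c_{e_2^\lambda}$ with the image of $c_\lambda$ under the Weil pairing, using the commutative diagram \eqref{E:commutative diagram}. By construction $c_{e_2^\lambda}$ is the image of $e_2^\lambda$ under the connecting map of the bottom row of \eqref{E:commutative diagram}: since $A[2]$ is killed by $2$ and $e_2^\lambda$ is alternating, the class $c_\beta$ of the corollary (defined via the symmetric-square sequence \eqref{E:U^ab sheaf}) coincides with the one defined via the sequence \eqref{E:U^ab tensor F2} appearing in the diagram, the two being compared through the natural surjections $(\sheafU A[2])^\ab \to (\sheafU A[2])^\ab \tensor \F_2$ and $S^2 A[2] \to \Bigwedge{2} A[2]$. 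On the other hand $c_\lambda$ is the image of $\lambda$ under the connecting map of the top row, by definition~\eqref{E:c_lambda}. Since the left-hand vertical isomorphism of \eqref{E:commutative diagram} is $y \mapsto e_2(-,y)$, functoriality of connecting maps for the diagram gives $c_{e_2^\lambda}=e_{2,*}(c_\lambda)$, where $e_{2,*}\colon \HH^1(\Adual[2]) \to \HH^1(\HOM(A[2],\G_m))$ is induced by this isomorphism.

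It then remains to match the evaluation cup product with the cup product induced by the Weil pairing. Composing the evaluation pairing $\HOM(A[2],\G_m)\tensor A[2]\to\G_m$ with $e_2\tensor\id$ gives the pairing $(y,x')\mapsto e_2(x',y)$, which is the Weil pairing with its two arguments transposed. Thus $c_{e_2^\lambda}\cup x=e_{2,*}(c_\lambda)\cup x$ is the image of $c_\lambda\cup x\in\HH^2(\Adual[2]\tensor A[2])$ under $e_2$ precomposed with the swap, and graded-commutativity of the cup product rewrites this as $-\bigl(x \underset{e_2}{\cup} c_\lambda\bigr)$. I expect this transposition bookkeeping to be the only genuine subtlety. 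The sign $(-1)^{1\cdot 1}=-1$ is harmless: since $A[2]$ is $2$-torsion, the group $\HH^1(A[2])$ is killed by $2$, so $x \underset{e_2}{\cup} c_\lambda$ is as well, and the two sides agree.
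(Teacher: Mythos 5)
Your proposal is correct and takes essentially the same route as the paper's proof: both apply Corollary~\ref{C:4 homomorphisms} with $M=A[2]$, $N=\G_m$, $\beta=e_2^\lambda$ to equate the self cup product with the evaluation pairing against $c_{e_2^\lambda}$, and then use functoriality of connecting maps in the commutative diagram \eqref{E:commutative diagram} to identify $c_{e_2^\lambda}$ with the image of $c_\lambda$ under $e_2$. Your additional care about comparing the sequences \eqref{E:U^ab sheaf} and \eqref{E:U^ab tensor F2}, and about the transposition sign (harmless by $2$-torsion), merely makes explicit bookkeeping that the paper compresses into the parenthetical ``written backwards.''
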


\begin{proof}
The rightmost vertical map in \eqref{E:commutative diagram} maps
$\lambda$ to $e_2^\lambda$.
These are mapped by the horizontal connecting homomorphisms
to $c_\lambda \in \HH^1(\Adual[2])$ and 
$c_{e_2^\lambda} \in \HH^1(\HOM(A[2],\G_m))$,
which are identified by the leftmost vertical map $e_2$.
Apply Corollary~\ref{C:4 homomorphisms} with
$M=A[2]$, $N=\G_m$, and $\beta=e_2^\lambda$,
using $\EXT^1(A[2],\G_m)=0$:
map~\eqref{I:4 homomorphisms cup product}
gives the left hand side of \eqref{E:cup products for abelian scheme}
and map~\eqref{I:4 homomorphisms evaluation}
gives the right hand side of \eqref{E:cup products for abelian scheme}
(written backwards) because of the identification of $c_\lambda$ 
with $c_{e^\lambda}$ via $e_2$.
\end{proof}

\subsection{Criteria for triviality of the obstruction}
\label{S:criteria}

The following lemma serves only to prove 
Proposition~\ref{P:c_lambda=0}\eqref{I:factors through cyclic} below.

\begin{lemma}
\label{L:finite cyclic group}
Let $k$ be a field, and let $G$ be a finite cyclic group.
\begin{enumerate}[\upshape (a)]
\item\label{I:same dimension as dual}
Let $A$ be a finite-dimensional $kG$-module.
Let $A^*:=\Hom_k(A,k)$ be the dual representation,
and let $A^G$ be the subspace of $G$-invariant vectors.
Then $\dim A^G = \dim (A^*)^G$.
\item\label{I:G-set section}
If $0 \to A \to B \to C \to 0$ is an exact sequence of
finite-dimensional $kG$-modules,
and the surjection $B^* \to A^*$ admits a section as $G$-sets,
then the connecting homomorphism $C^G \to \HH^1(G,A)$ is $0$.
\end{enumerate}
\end{lemma}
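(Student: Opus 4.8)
The plan is to establish (a) as a statement about the single operator $\sigma$ and then to obtain (b) from (a) by a dimension count performed on the dual sequence. For (a), fix a generator $\sigma$ of $G$, so that $A^G = \ker(\sigma - 1 \colon A \to A)$. A functional $f \in A^*$ is $G$-invariant exactly when $f \circ (\sigma^{-1}-1) = 0$, i.e.\ when $f$ annihilates $\im(\sigma^{-1}-1) = \im(\sigma-1)$ (the two images coincide because $\sigma^{-1}-1 = -\sigma^{-1}(\sigma-1)$ and $\sigma^{-1}$ is invertible). Thus $(A^*)^G$ is the annihilator in $A^*$ of $\im(\sigma-1)$, whence
\[
  \dim (A^*)^G = \dim A - \dim \im(\sigma - 1) = \dim \ker(\sigma - 1) = \dim A^G
\]
by rank--nullity. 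This is all of (a); note it uses only that $G$ is generated by one element.

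For (b), I would dualize the given sequence to the exact sequence of $kG$-modules
\[
  0 \to C^* \to B^* \xrightarrow{\iota^*} A^* \to 0,
\]
where $\iota \colon A \to B$ is the inclusion, so that $\iota^*$ is precisely the surjection $B^* \to A^*$ of the hypothesis. The key observation is that the hypothesized section $s \colon A^* \to B^*$, although only a map of $G$-sets, is $G$-equivariant: hence for $f \in (A^*)^G$ we have $g \cdot s(f) = s(g \cdot f) = s(f)$ for all $g \in G$, so $s(f) \in (B^*)^G$. Therefore $\iota^* \colon (B^*)^G \to (A^*)^G$ is surjective, and the left-exact sequence of invariants yields
\[
  \dim (B^*)^G = \dim (C^*)^G + \dim (A^*)^G.
\]

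Now I would apply (a) to each of $A$, $B$, $C$ to rewrite this as $\dim B^G = \dim C^G + \dim A^G$. Feeding this into the cohomology exact sequence
\[
  0 \to A^G \to B^G \to C^G \xrightarrow{\delta} \HH^1(G,A)
\]
shows that the image of $B^G \to C^G$ has dimension $\dim B^G - \dim A^G = \dim C^G$, so the map is surjective; equivalently $\delta = 0$, which is the claim.

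The only real difficulty I anticipate is conceptual rather than computational. The section is supplied only on the dual sequence and only set-theoretically, so a priori it controls the invariants of the dual, not of the original sequence; part (a) is exactly the bridge that transports the resulting dimension equality for $A^*, B^*, C^*$ back to $A, B, C$. Without it the two sides of the count need not visibly agree in the modular case $\Char k \mid |G|$, which is the only case in which $\delta$ can be nonzero (when $\Char k \nmid |G|$ one has $\HH^1(G,A)=0$ outright).
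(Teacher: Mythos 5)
Your proof is correct and follows essentially the same route as the paper: part (a) is the same duality count (your annihilator/rank--nullity argument is just the coordinate-free form of the paper's ``rank of a matrix equals rank of its transpose''), and part (b) is identical — use the $G$-set section to get surjectivity of $(B^*)^G \to (A^*)^G$, transport the dimension identity back via (a), and conclude $\delta = 0$ from the exact sequence of invariants.
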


\begin{proof}\hfill
\begin{enumerate}[\upshape (a)]
\item Let $g$ be a generator of $G$.  If $M$ is a matrix representing
  the action of $g$ on $A$, the action of $g^{-1}$ on $A^*$ is given
  by the transpose $M^t$.  Then $\dim A^G = \dim \ker(M-1) = \dim
  \ker(M^t-1) = \dim (A^*)^G$, where the middle equality uses the fact
  that a matrix has the same rank as its transpose.
\item 
The section gives the $0$ at the right in
\[
	0 \to (C^*)^G \to (B^*)^G \to (A^*)^G \to 0.
\]
Taking dimensions and applying \eqref{I:same dimension as dual}
yields $\dim B^G = \dim A^G + \dim C^G$.
This together with the exactness of
\[
	0 \to A^G \to B^G \to C^G \to \HH^1(G,A)
\]
implies that the connecting homomorphism $C^G \to \HH^1(G,A)$ is $0$.\qedhere
\end{enumerate}
\end{proof}

\begin{proposition}
\label{P:c_lambda=0}
Let $\lambda \colon A \to \Adual$ be a self-dual homomorphism
of abelian varieties over a field $k$.
Suppose that at least one of the following hypotheses holds:
\begin{enumerate}[\upshape (a)]
\item\label{I:factors through cyclic} 
$\Char k \ne 2$ and the image $G$ of $G_k \to \Aut A[2](k_s)$ is cyclic.
\item\label{I:cT for perfect k of char 2}
$k$ is a perfect field of characteristic $2$.
\item\label{I:cT for archimedean k}
$k$ is $\R$ or $\C$.
\item\label{I:cT for local k}
$k$ is a nonarchimedean local field of residue characteristic not $2$,
and $A$ has \defi{good reduction}
(i.e., extends to an abelian scheme over the valuation ring of $k$).
\item\label{I:cT for finite k}
$k$ is a finite field.
\item\label{I:at most 4}
$\lambda(A[2])$ is an \'etale group scheme of rank at most $4$.
\end{enumerate}
Then $c_\lambda=0$.
\end{proposition}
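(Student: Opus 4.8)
The plan rests on the identification supplied by diagram~\eqref{E:commutative diagram}: under the isomorphism $e_2$ on the left, the class $c_\lambda \in \HH^1(\Adual[2])$ corresponds to the image $c_{e_2^\lambda}\in\HH^1(\HOM(A[2],\G_m))$ of $e_2^\lambda$ under the connecting map of the bottom row, exactly as in the proof of Theorem~\ref{T:cohomology pairing on abelian scheme}. Hence $c_\lambda=0$ precisely when $e_2^\lambda$ admits a quadratic refinement defined over $k$; in particular $e_2^\lambda=0$ forces $c_\lambda=0$, since a connecting map is a homomorphism. Several hypotheses reduce to others. Over $\C$ the sheaf $\Adual[2]$ is étale and $\C$ is separably closed, so $\HH^1$ vanishes; over $\R$ the image of $G_\R\cong\Z/2$ in $\Aut A[2](k_s)$ is cyclic, so part~(a) applies; this proves part~(c). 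For a finite field $G_k$ is procyclic, so in characteristic $\ne 2$ the image in $\Aut A[2](k_s)$ is cyclic and part~(a) applies, while in characteristic $2$ the field is perfect and part~(b) applies; thus part~(e) reduces to parts~(a) and~(b). For part~(d), good reduction lets me extend $A$ and $\lambda$ to an abelian scheme with a self-dual homomorphism over the valuation ring $\calO$; as the residue characteristic is not $2$, $\Adual[2]$ is finite étale over $\calO$, so restriction to the closed point is an isomorphism $\HH^1(\calO,\Adual[2])\isom\HH^1(\F_q,\Adual[2])$, under which the class over $\calO$ obtained by performing the construction of $c_\lambda$ over $\calO$ maps to the corresponding class over the finite residue field; the latter vanishes by part~(e), so $c_\lambda=0$.

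The central case, part~(a), is what Lemma~\ref{L:finite cyclic group} is designed for. Since $\Char k\ne2$ the sheaf $A[2]$ is étale and I may work in Galois cohomology; because $G_k$ acts through the finite cyclic quotient $G$, the class $c_{e_2^\lambda}$ is inflated from $\HH^1(G,-)$, so by naturality of connecting maps under inflation it suffices to kill the $G$-level connecting map. Identifying $\HOM(-,\G_m)$ with $\F_2$-linear duals via $\boldmu_2\cong\F_2$, the bottom row of~\eqref{E:commutative diagram} at $k_s$ is the dual of the sequence~\eqref{E:U^ab tensor F2} of $\F_2 G$-modules, whose dual surjection $(\sheafU A[2])^\ab\tensor\F_2\to A[2]$ carries the canonical $G$-equivariant set-section $m\mapsto 1+m$ of~\eqref{E:cocycle of section}. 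Part~(b) of Lemma~\ref{L:finite cyclic group}, applied over the field $\F_2$, then shows the connecting map $(\Bigwedge{2}A[2])^{*,G}\to\HH^1(G,A[2]^*)$ is zero; as $e_2^\lambda$ lies in its source, $c_{e_2^\lambda}=0$ and hence $c_\lambda=0$.

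For part~(f) the form $e_2^\lambda$ factors through the quotient $A[2]\surjects E:=\lambda(A[2])$, an étale group scheme of order at most $4$, yielding a symplectic form of even rank at most $2$. When $\Char k\ne 2$ (so everything is étale and $\boldmu_2\cong\F_2$), a symplectic $\F_2$-space of rank $0$ or $2$ carries a canonical quadratic refinement—for rank $2$, the unique refinement of Arf invariant $1$—which, being fixed by every automorphism of the form, is Galois-invariant; pulling it back to $A[2]$ gives a $k$-rational refinement of $e_2^\lambda$, so $c_\lambda=0$. The characteristic-$2$ instance of part~(f) is deferred to the discussion of part~(b).

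The genuinely new input is part~(b), together with the characteristic-$2$ cases of parts~(e) and~(f) that rest on it; I expect this to be the main obstacle. Here $\boldmu_2$ is connected and $\boldmu_2(\bar k)$ is trivial, so on $\bar k$-points $e_2^\lambda$ vanishes and the only refinement of $\bar k$-points is the trivial one; the étale part of the obstruction therefore vanishes automatically. Using that $k$ is perfect I would split the connected–étale sequence of $\Adual[2]$: the étale summand of $c_\lambda$ vanishes as just noted, while the connected summand lies in $\HH^1$'s built by dévissage from $\boldmu_2$ and $\alpha_2$, which vanish because $k=k^2$, whence $c_\lambda=0$. The delicate point—and where I expect the real work to lie—is making this connected–étale bookkeeping precise, in particular tracking $c_\lambda$ through Cartier duality in characteristic~$2$, and, for the characteristic-$2$ case of part~(f) over a possibly imperfect field, confirming that the étale nature of $\lambda(A[2])$ still forces the obstruction to vanish.
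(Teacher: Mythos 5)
Your parts (a), (c), (e), and the characteristic-$\ne 2$ half of (f) are correct and essentially the paper's own arguments: (a) is exactly the application of Lemma~\ref{L:finite cyclic group}\eqref{I:G-set section} to the bottom row of \eqref{E:commutative diagram} via the $G$-set section $m \mapsto 1+m$, with inflation handled as in the paper; (c) and (e) reduce to (a) and (b) in the same way; and your ``unique refinement of Arf invariant $1$'' is precisely the paper's $q'$, the morphism sending $0$ to $1$ and every other point to $-1$. Your (d) is a genuinely different route: the paper cites Serre--Tate to conclude $k(A[2])/k$ is unramified, so the Galois image is cyclic and (a) applies, whereas you spread everything out over $\calO$ and use $\HH^1(\calO,\Adual[2]) \isom \HH^1(\F_q,\Adual[2])$. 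This works (the extension of $\lambda$ over $\calO$ comes from the N\'eron property, and the formation of $c_\lambda$ commutes with base change because it is a connecting map of sheaves on the big fppf site), at the cost of more moving parts than the paper's one-line reduction to (a).

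The genuine gap is (b), which you yourself flag as ``where the real work lies'' --- and it is exactly where the paper's proof has its substantive idea. Your claim that the \'etale part of the obstruction ``vanishes automatically'' because $e_2^\lambda$ vanishes on $\kbar$-points is a non sequitur: $c_\lambda$ obstructs a \emph{scheme-theoretic} quadratic refinement, not a refinement of points, and the group in which this component lives does not vanish. Concretely, write $M = A[2] = M_{el} \oplus M_{le} \oplus M_{ll}$; the \'etale part of $\Adual[2] = M^\vee$ is $(M_{le})^\vee$, and for an ordinary elliptic curve over a perfect field $k$ of characteristic $2$ this is $\Z/2\Z$, with $\HH^1(G_k,\Z/2\Z) = k/\wp(k) \ne 0$ already for $k = \F_2$. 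So no group-vanishing argument can kill the \'etale component; one must show the class itself is zero. The paper does this by observing that the pairing restricted to $M_{el} \oplus M_{le}$ is forced to be hyperbolic --- any pairing of $M_{el}$ with itself, or of $M_{le}$ with itself, into $\boldmu_2$ is zero, since \'etale-to-connected and connected-to-\'etale homomorphisms vanish, so $b_e\bigl((m,n),(m',n')\bigr) = B(m,n')\,B(m',n)$ for some bilinear $B \colon M_{el} \times M_{le} \to \boldmu_2$ --- and then exhibiting the explicit refinement $q(m,n) = B(m,n)$. The local-local part is handled by \cite{MilneADT2006}*{Proposition~III.6.1}, which is the one piece of your d\'evissage that is sound as stated; your ``iterated extension of $\boldmu_2$'' step for the multiplicative part is also shaky, since a form of $\boldmu_2^r$ need not be filtered by $\boldmu_2$ over $k$ (though $\HH^1$ of that part does vanish over perfect $k$). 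Finally, deferring the characteristic-$2$ case of (f) to (b) leaves a hole, because (f) allows imperfect fields; no perfectness is needed in the paper's argument, since in characteristic $2$ the induced nondegenerate pairing $e'$ on the \'etale group $\lambda(A[2])$ is a homomorphism from an \'etale group scheme to the connected group scheme $\boldmu_2$, hence trivial, which forces $\lambda(A[2]) = 0$ and lets the constant refinement work.
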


\begin{proof}\hfill
\begin{enumerate}[\upshape (a)]
  \item 
Apply Lemma~\ref{L:finite cyclic group}\eqref{I:G-set section}
to the bottom row of \eqref{E:commutative diagram},
viewed as a sequence of $\F_2 G$-modules;
it applies since the dual sequence is~\eqref{E:U^ab tensor F2} for $M:=A[2]$,
and the section $s$ of Section~\ref{S:construction}
yields a $G$-set section $A[2] \to (\sheafU A[2])^\ab \tensor \F_2$.
Thus the top horizontal map in
\[
\xymatrix{
\HH^0(G,\HOM(\Bigwedge{2} A[2],\G_m)) \ar[r] \ar@{=}[d] 
	& \HH^1(G,\Adual[2]) \ar[d] \\
\HH^0(G_k,\HOM(\Bigwedge{2} A[2],\G_m)) \ar[r]^-\delta
	& \HH^1(G_k,\Adual[2]) \\
}
\]
is $0$.
Thus $\delta=0$.
Now \eqref{E:commutative diagram} shows that $c_\lambda=\delta(e_2^\lambda)=0$.
\item 
Let $M:=A[2]$, and let $M^\vee:=\HOM(M,\G_m)=\Adual[2]$ be its Cartier dual.
The bottom row of~\eqref{E:commutative diagram} yields an exact sequence
\begin{equation}
\label{E:H^0 and H^0}
	\HH^0(\HOM((\sheafU M)^\ab \tensor \F_2,\G_m)) 
	\to \HH^0(\textstyle{\HOM(\Bigwedge{2} M,\G_m)})
	\stackrel{\delta}\to \HH^1(M^\vee).
\end{equation}
It suffices to prove that $\delta=0$, or that the first map is surjective.
Equivalently, by the universal property of $(\sheafU M)^\ab \tensor \F_2$,
we need each alternating pairing $b \colon M \times M \to \boldmu_2$
to be $q(x+y)-q(x)-q(y)$ for some quadratic map $q \colon M \to \boldmu_2$.

In fact, we will prove this for {\em every} 
finite commutative group scheme $M$ over $k$ with $2M=0$.
Since $k$ is perfect, there is a canonical decomposition 
$M = M_{el} \directsum M_{le} \directsum M_{ll}$
into \'etale-local, local-\'etale, and local-local subgroup schemes.
Then 
$M^\vee = (M_{le})^\vee \directsum (M_{el})^\vee \directsum (M_{ll})^\vee$.
The homomorphism $M \to M^\vee$ induced by the alternating pairing
must map $M_{el}$ to $(M_{le})^\vee$, 
and $M_{le}$ to $(M_{el})^\vee$, 
and $M_{ll}$ to $(M_{ll})^\vee$.
In particular, $b=b_e+b_{ll}$ where $b_e$ and $b_{ll}$
are alternating pairings on $M_{el} \directsum M_{le}$ and $M_{ll}$,
respectively.
The pairing $b_e$ necessarily has the form
\[
	(m_{el},m_{le}),(m_{el}',m_{le}') 
	\mapsto B(m_{el},m_{le}') B(m_{el}',m_{le})
\]
for some bilinear pairing $B \colon M_{el} \times M_{le} \to \boldmu_2$.
Then $b_e$ comes from 
the quadratic map $(m_{el},m_{le}) \mapsto B(m_{el},m_{le})$.

It remains to consider the case $M=M_{ll}$.
Then $M^\vee(k_s)=0$.
By \cite{MilneADT2006}*{Proposition~III.6.1 and the paragraph preceding it},
$\HH^1(M^\vee)=\HH^1(G_k,M^\vee(k_s))=\HH^1(G_k,0)=0$,
so $\delta=0$.
\item Follows from~\eqref{I:factors through cyclic}.
\item The assumptions imply $k(A[2])$ is unramified over $k$
      (see~\cite{Serre-Tate1968}*{Theorem~1}, for example),
      so \eqref{I:factors through cyclic} applies.
\item Follows from \eqref{I:factors through cyclic} 
      and~\eqref{I:cT for perfect k of char 2}.
\item 
By definition of $e_2^\lambda$, 
the right kernel of $e_2^\lambda$ contains 
the kernel $K$ of $A[2] \stackrel{\lambda}\surjects \lambda(A[2])$.
Since $e_2^\lambda$ is alternating, 
the left kernel of $e_2^\lambda$ contains $K$ too.
Thus $e_2^\lambda$ induces a nondegenerate alternating pairing
\[
	e' \colon \lambda(A[2]) \times \lambda(A[2]) \to \G_m.
\]
In particular, the \'etale group scheme $\lambda(A[2])$
has square order, which by assumption is $1$ or $4$.
Let
\[
	q' \colon \lambda(A[2]) \to \G_m
\]
be the morphism taking $0$ to $1$ and all other $k_s$-points 
of $\lambda(A[2])$ to $-1$.
Then $q'$ is a quadratic form satisfying 
the identity $q'(x+y)-q'(x)-q'(y)=e'(x,y)$.
Now $q:=q' \circ \lambda$ is a quadratic form on $A[2]$
refining $e_2^\lambda$,
so $c_\lambda=0$.\qedhere
\end{enumerate}
\end{proof}

\subsection{Formula for the obstruction in the case of a line sheaf on a torsor}

Let $P$ be a torsor under $A$.
For $a \in A(S)$, let $\tau_a \colon P \to P$ be the translation.
Also, for $x \in P(S)$, let $\tau_x \colon A \to P$ be the torsor action.
The maps $\tau_x^*$ for local choices of sections $x$ 
induce a well-defined isomorphism $\PIC^0_{P/S} \isom \PIC^0_{A/S}$
since any $\tau_a^*$ is the identity on $\PIC^0_{A/S}$.
Let $\LL \in \PIC_{P/S}(S)$.
Generalizing \eqref{E:definition of phi_L}, we define
\begin{align*}
	\phi_\LL \colon A &\to \PIC^0_{P/S} \isom \PIC^0_{A/S} \\
	a &\mapsto \tau_a^* \LL \tensor \LL^{-1}.
\end{align*}
We may view $\phi_\LL$ as an element of $\Hom_{\textup{self-dual}}(A,\Adual)$.
If $x \in P(S)$, then $\phi_{\tau_x^* \LL} = \phi_\LL$.

\begin{proposition}
\label{P:formula for c_lambda}
Let $P$ be a torsor under $A$, equipped with an order-$2$ automorphism
$\iota \colon P \to P$ compatible with $[-1] \colon A \to A$.
The fixed locus $P^\iota$ of $\iota$ is a torsor under $A[2]$;
let $c \in \HH^1(A[2])$ be its class.
Let $\LL \in \PIC_{P/S}(S)$ be such that $\iota^* \LL \isom \LL$, 
and let $\lambda=\phi_\LL \colon A \to \Adual$.
Then $c_\lambda = \lambda(c)$ in $\HH^1(\Adual[2])$.
\end{proposition}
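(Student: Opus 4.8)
The plan is to realize both $c_\lambda$ and $\lambda(c)$ as classes of torsors under $\Adual[2]$ and to exhibit an explicit morphism between them covering $\lambda \colon A[2] \to \Adual[2]$. By the geometric interpretation of $c_\lambda$ recorded above, $c_\lambda$ is the class of the torsor $T$ under $\Adual[2]$ whose local sections are the symmetric line sheaves $\LL'$ on $A$ (classes in $\PIC_{A/S}^{\Sym}$) with $\phi_{\LL'} = \lambda$, the action of $m \in \Adual[2] \subseteq \PIC_{A/S}^{\Sym}$ being $\LL' \mapsto \LL' \tensor m$. On the other hand, $\lambda(c)$ is by definition the image of $c = [P^\iota]$ under the map $\HH^1(A[2]) \to \HH^1(\Adual[2])$ induced by $\lambda$. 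So it suffices to produce a morphism of fppf sheaves $P^\iota \to T$ equivariant for $\lambda \colon A[2] \to \Adual[2]$, since such a morphism exhibits $T$ as $\lambda_* P^\iota$ and hence gives $[T] = \lambda_*[P^\iota]$.

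First I would write down the candidate morphism. For a local section $x$ of $P^\iota$, the trivialization $\tau_x \colon A \to P$ lets me form $\tau_x^* \LL \in \PIC_{A/S}$, and the remark preceding the proposition gives $\phi_{\tau_x^* \LL} = \phi_\LL = \lambda$. The construction is natural in $x$ and $\LL$ is globally defined on $P$, so $x \mapsto \tau_x^* \LL$ glues to a morphism $P^\iota \to \PIC_{A/S}$; it remains to check that it factors through $T$ and is $\lambda$-equivariant.

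The essential point — and the step I expect to require the most care — is that $\tau_x^* \LL$ is \emph{symmetric} precisely because $x$ is fixed by $\iota$. Here I would use the compatibility of $\iota$ with $[-1]$: for a local section $x$ of $P^\iota$ one has $\iota(a+x) = -a + \iota(x) = -a + x$ for all $a \in A$, that is, $\iota \circ \tau_x = \tau_x \circ [-1]$ as maps $A \to P$. Pulling back $\LL$ then gives
\[
	[-1]^*\tau_x^* \LL = (\tau_x \circ [-1])^* \LL = (\iota \circ \tau_x)^* \LL = \tau_x^*\iota^*\LL \isom \tau_x^* \LL,
\]
where the final isomorphism is the hypothesis $\iota^*\LL \isom \LL$. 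Thus $\tau_x^* \LL$ is symmetric with $\phi_{\tau_x^* \LL} = \lambda$, so the morphism indeed lands in $T$. The identity $\iota \circ \tau_x = \tau_x \circ [-1]$ uses $\iota(x)=x$ in an essential way, so it is exactly the restriction to the fixed locus $P^\iota$ that forces the output to be symmetric; this is the crux of the argument.

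Finally I would verify $\lambda$-equivariance. For $t \in A[2]$ and a local section $x$ of $P^\iota$, translation gives $\tau_{t+x} = \tau_x \circ \tau_t$ with $\tau_t \colon A \to A$ translation by $t$, so
\[
	\tau_{t+x}^* \LL = \tau_t^* \tau_x^* \LL = \tau_x^* \LL \tensor \phi_{\tau_x^* \LL}(t) = \tau_x^* \LL \tensor \lambda(t),
\]
using the definition of $\phi_{\tau_x^*\LL}$ together with $\phi_{\tau_x^*\LL} = \lambda$. Since $t \in A[2]$ we have $\lambda(t) \in \Adual[2]$, so this says exactly that $x \mapsto \tau_x^* \LL$ intertwines the $A[2]$-action on $P^\iota$ with the $\Adual[2]$-action on $T$ through $\lambda$. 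Hence $P^\iota \to T$ is a morphism of torsors covering $\lambda \colon A[2] \to \Adual[2]$, and therefore $c_\lambda = [T] = \lambda_*[P^\iota] = \lambda(c)$ in $\HH^1(\Adual[2])$.
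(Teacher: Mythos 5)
Your proposal is correct and follows essentially the same route as the paper: both construct the map $x \mapsto \tau_x^*\LL$ from $P^\iota$ to the torsor of symmetric line sheaves with $\phi_{\LL'}=\lambda$, verify symmetry via $\iota\circ\tau_x = \tau_x\circ[-1]$ and $\lambda$-equivariance via $\tau_{t+x}=\tau_x\circ\tau_t$, and conclude by comparing torsor classes. Your write-up merely makes explicit two points the paper leaves implicit (the derivation of $\iota\circ\tau_x=\tau_x\circ[-1]$ from $\iota(x)=x$, and the identification of $c_\lambda$ with the class of the torsor of symmetric line sheaves, which the paper records just after defining $c_\lambda$).
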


\begin{proof}
If $x$ is a section of $P^\iota$,
then $[-1]^* \tau_x^* \LL \isom \tau_x^* \iota^* \LL \isom \tau_x^* \LL$,
so we obtain a map
\begin{align*}
  \gamma \colon P^\iota &\to \PIC^{\Sym}_{A/S} \\
  x &\to \tau_x^* \LL.
\end{align*}
For sections $a \in A[2]$ and $x \in P^\iota$, we have
\[
	\gamma(a+x) = \tau_{a+x}^*\LL = \tau_a^* (\tau_x^* \LL) = \phi_{\tau_x^* \LL}(a) \tensor \tau_x^* \LL = \lambda(a) \tensor \gamma(x)
\]
in $\PIC^{\Sym}_{A/S}$.
In other words, $\gamma$ is a torsor map 
(with respect to $\lambda \colon A[2] \to \Adual[2]$)
from the torsor $P^\iota$ (under $A[2]$)
to the torsor (under $\Adual[2]$) of line sheaves
in $\PIC^{\Sym}_{A/S}$ with N\'eron-Severi class $\lambda$.
Taking classes of these torsors yields $\lambda(c)=c_\lambda$.
\end{proof}

\subsection{Application to Jacobians}
\label{S:Jacobians}

Let $X \to S$ be a family of genus-$g$ curves,
by which we mean a smooth proper morphism whose geometric fibers
are integral curves of genus $g$.
(If $g \ne 1$, then the relative canonical sheaf or its inverse
makes $X \to S$ projective:
see Remark~2 on page~252 of \cite{Bosch-Lutkebohmert-Raynaud1990}.)
By the statement and proof of Proposition~4 on page 260
of \cite{Bosch-Lutkebohmert-Raynaud1990},
\begin{enumerate}
\item There is a decomposition of functors $\PIC_{X/S} \isom \coprod_{n \in \Z} \PIC^n_{X/S}$.
\item The subfunctor $\PIC^0_{X/S}$ is (represented by) a projective abelian scheme $A$ over $S$.
\item The subfunctor $\PIC^{g-1}_{X/S}$ is (represented by) a smooth projective scheme $P$ over $S$, a torsor under $A$.  (If $g=1$, then $P=A$.)
\item The scheme-theoretic image of the ``summing'' map $X^{g-1} \to P$
is an effective relative Cartier divisor on $P$ (take this to be empty if $g=0$).  Let $\Theta$ be the associated line sheaf on $P$.
\item The homomorphism $\lambda:=\phi_\Theta \colon A \to \Adual$ is an isomorphism.
\item Define $\iota \colon P \to P$ by $\FF \mapsto \omega_{X/S} \tensor \FF^{-1}$; then $\iota^* \Theta \isom \Theta$.  (To prove this, one can reduce to the case where $S$ is a moduli scheme of curves with level structure, and then to the case where $S$ is the spectrum of a field, in which case it is a consequence of the Riemann-Roch theorem.)
\end{enumerate}

\begin{definition}
The \defi{theta characteristic torsor} $\calT$ 
is the closed subscheme of $P=\PIC^{g-1}_{X/S}$ parametrizing classes
whose square is the canonical class $\omega_{X/S} \in \PIC^{2g-2}_{X/S}(S)$.
\end{definition}

Equivalently, $\calT = P^\iota$.
Let $c_\calT \in \HH^1(A[2])$ be the class of this torsor.

\begin{theorem}
\label{T:cohomology pairing on Jacobian}
Let $X \to S$ be a family of genus-$g$ curves,
and let $A,\lambda,c_\calT$ be as above.
Then $c_\lambda=\lambda(c_\calT)$ in $\HH^1(\Adual[2])$,
and for any $x \in \HH^1(A[2])$ we have
\begin{equation}
\label{E:cup products for Jacobian}
	x \underset{e_2^\lambda}{\cup} x = x \underset{e_2^\lambda}{\cup} c_\calT
\end{equation}
in $\HH^2(\G_m)$.
\end{theorem}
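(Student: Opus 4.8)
The plan is to obtain both assertions as direct consequences of the two results already in hand---Proposition~\ref{P:formula for c_lambda} and Theorem~\ref{T:cohomology pairing on abelian scheme}---so that the work reduces to checking hypotheses and invoking naturality of the cup product.

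For the first identity $c_\lambda = \lambda(c_\calT)$, I would apply Proposition~\ref{P:formula for c_lambda} to the torsor $P = \PIC^{g-1}_{X/S}$ with its automorphism $\iota$ and the line sheaf $\LL = \Theta$. Three hypotheses must be verified: that $\iota$ has order $2$, that $\iota^*\Theta \isom \Theta$, and that $\iota$ is compatible with $[-1]$. Order-two-ness is immediate from $\iota^2(\FF) = \omega_{X/S} \tensor (\omega_{X/S} \tensor \FF^{-1})^{-1} \isom \FF$, and $\iota^*\Theta \isom \Theta$ is item~(6) in the list preceding the definition of $\calT$. For the compatibility, the $A$-action on $P$ is by tensoring, so for sections $a \in A$ and $\FF \in P$ one computes
\[
	\iota(a \tensor \FF) = \omega_{X/S} \tensor (a \tensor \FF)^{-1}
	= a^{-1} \tensor (\omega_{X/S} \tensor \FF^{-1}),
\]
and the right-hand side is exactly $(-a)$ acting on $\iota(\FF)$; this is compatibility with $[-1]$. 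Since $\calT = P^\iota$ by definition and $\lambda = \phi_\Theta$, Proposition~\ref{P:formula for c_lambda} then yields $c_\lambda = \lambda(c_\calT)$.

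For the second identity I would start from Theorem~\ref{T:cohomology pairing on abelian scheme}, which gives $x \underset{e_2^\lambda}{\cup} x = x \underset{e_2}{\cup} c_\lambda$, substitute the first identity to rewrite the right-hand side as $x \underset{e_2}{\cup} \lambda(c_\calT)$, and then transport $\lambda$ from the cohomology argument into the coefficient pairing. This last move is the naturality of the cup product in its second coefficient sheaf: since $\lambda(c_\calT)$ is the image of $c_\calT$ under $\lambda_* \colon \HH^1(A[2]) \to \HH^1(\Adual[2])$, and since $e_2^\lambda = e_2 \circ (\id \tensor \lambda)$ as pairings $A[2] \tensor A[2] \to \G_m$, one has $x \underset{e_2}{\cup} \lambda(c_\calT) = x \underset{e_2^\lambda}{\cup} c_\calT$ in $\HH^2(\G_m)$, which is the desired equality. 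I do not anticipate a genuine obstacle here: the theorem is a synthesis of the earlier results rather than a new argument, and the only point requiring care is confirming that this functoriality step is legitimate, namely that pushing $\lambda$ across the cup product is precisely the standard naturality applied to the factorization $e_2^\lambda = e_2 \circ (\id \tensor \lambda)$.
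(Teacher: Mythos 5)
Your proof is correct and takes essentially the same route as the paper: apply Proposition~\ref{P:formula for c_lambda} with $P=\PIC^{g-1}_{X/S}$ and $\LL=\Theta$ to get $c_\lambda=\lambda(c_\calT)$, then substitute this into Theorem~\ref{T:cohomology pairing on abelian scheme}. The paper leaves implicit both the hypothesis checks (order of $\iota$, compatibility with $[-1]$) and the final functoriality step $x \underset{e_2}{\cup} \lambda(c_\calT) = x \underset{e_2^\lambda}{\cup} c_\calT$; your write-up simply makes these explicit, and correctly.
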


\begin{proof}
Proposition~\ref{P:formula for c_lambda} 
with $P = \PIC^{g-1}_{X/S}$ and $\LL = \Theta$
yields $c_\lambda=\lambda(c_\calT)$.
So \eqref{E:cup products for abelian scheme}
in Theorem~\ref{T:cohomology pairing on abelian scheme}
becomes \eqref{E:cup products for Jacobian}.
\end{proof}

\begin{remark}
\label{R:Atiyah and Mumford}
If $S=\Spec k$ for a field $k$ of characteristic not $2$,
and the action of $G_k$ on $A[2](k_s)$ factors through a cyclic quotient,
then Proposition~\ref{P:c_lambda=0}\eqref{I:factors through cyclic} 
gives $c_\lambda=0$, so $c_\calT=0$, 
recovering the result of M.~Atiyah~\cite{Atiyah1971}*{\S5}
that under these hypotheses $X$ has a rational theta characteristic.

Similarly,
if $S=\Spec k$ for a perfect field $k$ of characteristic~$2$,
then Proposition~\ref{P:c_lambda=0}\eqref{I:cT for perfect k of char 2}
gives $c_\lambda=0$, so $c_\calT=0$.
In fact, the proof produces a canonical $k$-point of $\calT$.
This generalizes an observation of Mumford~\cite{Mumford1971}*{p.~191} 
that a curve over an algebraically closed field of characteristic~$2$
has a canonical theta characteristic.

Additional criteria for the existence of a rational theta characteristic
are given in~\cite{Sharif2011-preprint}.
\end{remark}

\subsection{Hyperelliptic Jacobians}
\label{S:hyperelliptic Jacobians}

\begin{proposition}
\label{P:cT for hyperelliptic curve}
If $E$ is an elliptic curve, then $x \underset{e_2^\lambda}{\cup} x = 0$
for all $x \in \HH^1(E[2])$.
The same holds for the Jacobian of any hyperelliptic curve $X$
if it has a rational Weierstrass point or its genus is odd.
In particular, this applies to $y^2=f(x)$ with $f$ separable of
degree $n \not\equiv 2 \pmod{4}$ over a field
of characteristic not $2$.
\end{proposition}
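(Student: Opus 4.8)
The plan is to reduce the entire statement to the vanishing of $c_\calT$ and then to exhibit, in each case, an explicit rational theta characteristic. By Theorem~\ref{T:cohomology pairing on Jacobian} we have
\[
	x \underset{e_2^\lambda}{\cup} x = x \underset{e_2^\lambda}{\cup} c_\calT
\]
for every $x \in \HH^1(A[2])$, and the pairing on the right is bilinear in its two arguments; hence it suffices to show $c_\calT = 0$. Recall that $c_\calT$ is the class of the torsor $\calT = P^\iota$ parametrizing theta characteristics, so $c_\calT = 0$ exactly when $\calT$ has a section over $S$, i.e.\ when $X$ admits a rational theta characteristic: a class $L \in \PIC^{g-1}_{X/S}(S)$ with $L^{\tensor 2} \isom \omega_{X/S}$. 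Thus the whole proposition becomes the problem of producing such an $L$.

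For an elliptic curve (genus $1$) the relative dualizing sheaf is pulled back from the base, so $\omega_{X/S}$ is the zero class in $\PIC^0_{X/S}$; then $L = \OO_X$ is a theta characteristic and $c_\calT = 0$. For a hyperelliptic $X$ of genus $g \ge 2$ I would use the hyperelliptic class $H \in \PIC^2_{X/S}(S)$ together with the standard relation $\omega_{X/S} \isom H^{\tensor(g-1)}$. If $X$ has a rational Weierstrass point $W$, then $2W \sim H$ (a ramification point of the degree-$2$ map pulls back a point with multiplicity $2$), so $L := \OO((g-1)W)$ is an honest rational line sheaf with $L^{\tensor 2} \isom H^{\tensor(g-1)} \isom \omega_{X/S}$. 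If instead $g$ is odd, then $(g-1)/2$ is an integer and $L := H^{\tensor(g-1)/2}$ is a class in $\PIC^{g-1}_{X/S}(S)$ with $L^{\tensor 2} \isom \omega_{X/S}$. In either case $c_\calT = 0$, which proves the first two sentences.

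Finally, for $y^2 = f(x)$ with $f$ separable of degree $n$ over a field of characteristic not $2$, I would match the hypothesis $n \not\equiv 2 \pmod 4$ to these cases via the genus formula: $g = (n-1)/2$ when $n$ is odd and $g = n/2 - 1$ when $n$ is even. When $n$ is odd the unique point at infinity is fixed by the hyperelliptic involution, hence is a rational Weierstrass point, and the rational-Weierstrass-point construction applies; when $n \equiv 0 \pmod 4$ the genus $g = n/2 - 1$ is odd, and the odd-genus construction applies. The excluded residue $n \equiv 2 \pmod 4$ is precisely the one producing an even-genus curve with no forced rational Weierstrass point, so it falls outside both constructions.

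I do not expect a serious obstacle, since each case is essentially a one-line construction once the reduction is in place. The one point that needs care is the rationality of the hyperelliptic class $H$: when $X$ has no rational point one cannot simply write $H = \pi^*\OO_{\PP^1}(1)$, but for $g \ge 2$ the $g^1_2$ is unique and hence Galois-invariant, so it still defines a section of the Picard functor $\PIC^2_{X/S}$; this is all that is needed, since $c_\calT$ is defined in terms of $\PIC$ rather than of actual line sheaves. The only remaining work is the elementary degree/genus bookkeeping described above.
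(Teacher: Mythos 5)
Your proposal is correct and follows essentially the same route as the paper: reduce via Theorem~\ref{T:cohomology pairing on Jacobian} and bilinearity to showing $c_\calT=0$, then exhibit a rational theta characteristic in each case ($\OO_E$ for an elliptic curve, $\OO((g-1)W)$ given a rational Weierstrass point, and the $(g-1)/2$ power of the hyperelliptic class when $g$ is odd, the latter being rational as a point of $\PIC^2_{X/k}$ exactly as you argue). Your extra bookkeeping matching $n \not\equiv 2 \pmod 4$ to the two cases is the intended, if unstated, content of the paper's last sentence.
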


\begin{proof}
For an elliptic curve $E$, the trivial line sheaf $\OO_E$ 
is a theta characteristic defined over $k$.
Now suppose that $X$ is a hyperelliptic curve of genus $g$,
so it is a degree-$2$ cover of a genus-$0$ curve $Y$.
The class of a point in $Y(k_s)$
pulls back to a $k$-point of $\PIC_{X/k}^2$,
and if $g$ is odd, multiplying by $(g-1)/2$ gives a $k$-point of $\calT$.
On the other hand, if $X$ has a rational Weierstrass point $P$,
then $\OO((g-1)P)$ is a theta characteristic defined over $k$.
So $\calT$ is trivial in all these cases.
Now apply Theorem~\ref{T:cohomology pairing on Jacobian}.
\end{proof}

\begin{example} \label{E:genus 2}
Suppose that $X$ is a genus~$2$ curve,
the smooth projective model of $y^2=f(x)$ 
where $f$ is a degree-$6$ separable polynomial
over a field $k$ of characteristic not $2$.
Let $\Delta$ be the set of zeros of $f$ in $k_s$.
As explained in~\cite{Mumford1971}*{p.~191},
the group $A[2]$ and its torsor $\calT$
can be understood explicitly in terms of $\Delta$.
Namely, for $m \in \Z/2\Z$, 
let $\calW_m$ be the quotient of the sum-$m$ part of
the permutation module $\F_2^\Delta\isom \F_2^{2g+2}$ 
by the diagonal addition action of $\F_2$.
Then the $G_k$-module $A[2]$ may be identified with $\calW_0$,
and its torsor $\calT$ may be identified with $\calW_1$.
Using this, one can show:
\begin{enumerate}[\upshape (a)]
\item For $f(x)=(x^2+1)(x^2-3)(x^2+3)$ over $\Q_3$, we have $c_\calT \ne 0$.
\item\label{I:nonzero Sha}
 For $f(x)=x^6+x+6$ over $\Q$, 
we have 
\[
0 \ne c_\calT \in \Sha^1(\Q,A[2])
:=\ker\left( \HH^1(\Q,A[2]) \to \prod_{p \le \infty} \HH^1(\Q_p,A[2]) \right).
\]
({\em Proof:}
The discriminant of $f$ is $-\ell$, where $\ell$ is the prime $362793931$.
For $p \notin \{2,\ell\}$,
the element $c_\calT$ maps to $0$ in $\HH^1(\Q_p,A[2])$
by Proposition~\ref{P:c_lambda=0}(\ref{I:cT for archimedean k},\ref{I:cT for local k}),
and $f(x)$ has a zero in each of $\Q_2$ and $\Q_\ell$,
so the same is true at those places.
On the other hand, the Galois group of $f$ over $\Q$ is $S_6$,
so $c_\calT \ne 0$.)
\end{enumerate}
\end{example}

%****************************************************************************
\section*{Acknowledgements} 

We thank Benedict Gross for the suggestion to look at 
the self cup product on $\HH^1(k,A[2])$ induced by the Weil pairing $e_2$
on a Jacobian.
We also thank Brian Conrad for comments.

\begin{bibdiv}
\begin{biblist}
% \bibselect{big}

\bib{Atiyah1971}{article}{
  author={Atiyah, Michael F.},
  title={Riemann surfaces and spin structures},
  journal={Ann. Sci. \'Ecole Norm. Sup. (4)},
  volume={4},
  date={1971},
  pages={47--62},
  issn={0012-9593},
  review={\MR {0286136 (44 \#3350)}},
}

\bib{Bosch-Lutkebohmert-Raynaud1990}{book}{
  author={Bosch, Siegfried},
  author={L{\"u}tkebohmert, Werner},
  author={Raynaud, Michel},
  title={N\'eron models},
  series={Ergebnisse der Mathematik und ihrer Grenzgebiete (3) [Results in Mathematics and Related Areas (3)]},
  volume={21},
  publisher={Springer-Verlag},
  place={Berlin},
  date={1990},
  pages={x+325},
  isbn={3-540-50587-3},
  review={\MR {1045822 (91i:14034)}},
}

\bib{Faltings-Chai1990}{book}{
  author={Faltings, Gerd},
  author={Chai, Ching-Li},
  title={Degeneration of abelian varieties},
  series={Ergebnisse der Mathematik und ihrer Grenzgebiete (3) [Results in Mathematics and Related Areas (3)]},
  volume={22},
  note={With an appendix by David Mumford},
  publisher={Springer-Verlag},
  place={Berlin},
  date={1990},
  pages={xii+316},
  isbn={3-540-52015-5},
  review={\MR {1083353 (92d:14036)}},
}

\bib{Giraud1971}{book}{
  author={Giraud, Jean},
  title={Cohomologie non ab\'elienne},
  language={French},
  note={Die Grundlehren der mathematischen Wissenschaften, Band 179},
  publisher={Springer-Verlag},
  place={Berlin},
  date={1971},
  pages={ix+467},
  review={\MR {0344253 (49 \#8992)}},
}

\bib{Hatcher-Algebraic-Topology}{book}{
  author={Hatcher, Allen},
  title={Algebraic topology},
  date={2010-06-30},
  note={Downloaded from \url {http://www.math.cornell.edu/~hatcher/AT/ATpage.html}\phantom {m}},
}

\bib{MilneADT2006}{book}{
  author={Milne, J. S.},
  title={Arithmetic duality theorems},
  publisher={BookSurge, LLC},
  edition={Second edition},
  date={2006},
  pages={viii+339},
  isbn={1-4196-4274-X},
  review={\MR {881804 (88e:14028)}},
}

\bib{MumfordAV1970}{book}{
  author={Mumford, David},
  title={Abelian varieties},
  series={Tata Institute of Fundamental Research Studies in Mathematics, No. 5 },
  publisher={Published for the Tata Institute of Fundamental Research, Bombay},
  date={1970},
  pages={viii+242},
  review={\MR {0282985 (44 \#219)}},
}

\bib{Mumford1971}{article}{
  author={Mumford, David},
  title={Theta characteristics of an algebraic curve},
  journal={Ann. Sci. \'Ecole Norm. Sup. (4)},
  volume={4},
  date={1971},
  pages={181--192},
  issn={0012-9593},
  review={\MR {0292836 (45 \#1918)}},
}

\bib{Polishchuk2003}{book}{
  author={Polishchuk, Alexander},
  title={Abelian varieties, theta functions and the Fourier transform},
  series={Cambridge Tracts in Mathematics},
  volume={153},
  publisher={Cambridge University Press},
  place={Cambridge},
  date={2003},
  pages={xvi+292},
  isbn={0-521-80804-9},
  review={\MR {1987784 (2004m:14094)}},
}

\bib{Poonen-Rains-selmer-preprint}{misc}{
  author={Poonen, Bjorn},
  author={Rains, Eric},
  title={Random maximal isotropic subspaces and Selmer groups},
  date={2011-04-10},
  note={Preprint},
}

\bib{Poonen-Stoll1999}{article}{
  author={Poonen, Bjorn},
  author={Stoll, Michael},
  title={The Cassels-Tate pairing on polarized abelian varieties},
  journal={Ann. of Math. (2)},
  volume={150},
  date={1999},
  number={3},
  pages={1109\ndash 1149},
  issn={0003-486X},
  review={\MR {1740984 (2000m:11048)}},
}

\bib{Serre-Tate1968}{article}{
  author={Serre, Jean-Pierre},
  author={Tate, John},
  title={Good reduction of abelian varieties},
  journal={Ann. of Math. (2)},
  volume={88},
  date={1968},
  pages={492--517},
  issn={0003-486X},
  review={\MR {0236190 (38 \#4488)}},
}

\bib{SGA6}{book}{
  title={Th\'eorie des intersections et th\'eor\`eme de Riemann-Roch},
  language={French},
  series={Lecture Notes in Mathematics, Vol. 225},
  note={S\'eminaire de G\'eom\'etrie Alg\'ebrique du Bois-Marie 1966--1967 (SGA 6); Dirig\'e par P. Berthelot, A. Grothendieck et L. Illusie. Avec la collaboration de D. Ferrand, J. P. Jouanolou, O. Jussila, S. Kleiman, M. Raynaud et J. P. Serre},
  publisher={Springer-Verlag},
  place={Berlin},
  date={1971},
  pages={xii+700},
  review={\MR {0354655 (50 \#7133)}},
  label={SGA 6},
}

\bib{Sharif2011-preprint}{misc}{
  author={Sharif, Shahed},
  title={A descent map for curves with totally degenerate semi-stable reduction},
  date={2011-03-14},
  note={Preprint},
}

\bib{Waterhouse1971}{article}{
  author={Waterhouse, William C.},
  title={Principal homogeneous spaces and group scheme extensions},
  journal={Trans. Amer. Math. Soc.},
  volume={153},
  date={1971},
  pages={181--189},
  issn={0002-9947},
  review={\MR {0269659 (42 \#4554)}},
}

\bib{Yoneda1958}{article}{
  author={Yoneda, Nobuo},
  title={Note on products in ${\rm Ext}$},
  journal={Proc. Amer. Math. Soc.},
  volume={9},
  date={1958},
  pages={873--875},
  issn={0002-9939},
  review={\MR {0175957 (31 \#233)}},
}

\bib{Zarhin1974}{article}{
  author={Zarhin, Ju. G.},
  title={Noncommutative cohomology and Mumford groups},
  language={Russian},
  journal={Mat. Zametki},
  volume={15},
  date={1974},
  pages={415--419},
  issn={0025-567X},
  review={\MR {0354612 (50 \#7090)}},
}

\end{biblist}
\end{bibdiv}

\end{document}